\theoremstyle{plain}
\newtheorem{theorem}{Theorem}[section]
\newtheorem{conjecture}[theorem]{Conjecture}
\newtheorem{corollary}[theorem]{Corollary}
\newtheorem{lemma}[theorem]{Lemma}
\newtheorem{example}[theorem]{Example}
\theoremstyle{remark}
\theoremstyle{definition}
\begin{document}
\title[A survey on graphs with convex quadratic stability number]{\sc A survey on graphs with convex quadratic stability number}
\author{Domingos M. Cardoso}
\address{Center for Research and Development in Mathematics and Applications,
Department of Mathematics, University of Aveiro, 3810-193 Aveiro, Portugal}
\email{dcardoso@ua.pt}
\date{\today }
\date{}
\subjclass[2010]{90C20, 90C35, 05C69, 05C50}
\keywords{convex quadratic programming in graphs; stability number of graphs; relations between continuous and discrete optimization}
\date{\today }
\date{}
\maketitle

\begin{abstract}
A graph with convex quadratic stability number is a graph for which the stability number is determined by solving a convex quadratic program.
Since the very beginning, where a convex quadratic programming upper bound on the stability number was introduced, a necessary and sufficient
condition for this upper bound be attained was deduced.  The recognition of graphs  with convex quadratic stability number has been deeply
studied with several consequences from continuous and combinatorial point of view. This survey starts with an exposition of
some extensions of the classical Motzkin-Straus approach to the determination of the stability number of a graph and its relations with the
convex quadratic programming upper bound. The main advances, including several properties and alternative characterizations of graphs with
convex quadratic stability number are described as well as the algorithmic strategies developed for their recognition. Open problems and a
conjecture for a particular class of graphs, herein called adverse graphs, are presented, pointing out a research line which is a challenge
between continuous and discrete problems.
\end{abstract}

\section{Introduction}
The use of quadratic programming as a model for determining the stability number of a graph dates back to 1965, with the publication of Motzkin
and Straus \cite{MotzkinStraus65}. In this publication, the clique number of a graph (and then the stability number of its complement) is directly
determined from the optimal value of a quadratic programming problem with a very simple formulation which uses the adjacency matrix of the graph.
Despite this approach has no implication on the complexity of the determination of the clique (stability) number of a graph, the obtained relation
between a combinatorial parameter and the optimal value of a continuous optimization problem was very surprising and became a strong motivation
for the research on the application of continuous optimization to solve combinatorial problems. The quadratic programming approach to the
clique (stability) number of a graph has been deeply studied in several papers, namely in \cite{Bomze, Pelilo95, PeliloJagota95}. In \cite{Luz95}
a convex quadratic programming upper bound on the stability number of a graph was introduced and a necessary and sufficient condition for
this upper bound be attained was proved. This approach was extended to graphs with vertex weights in \cite{LuzCardoso2}. When the graph is regular,
as it was firstly proved in \cite{Luz95}, the optimal value of this convex quadratic program coincides with the very popular upper bound on the
stability number of regular graphs obtained by Hoffman (unpblished) and presented by Lov\'asz in \cite{Lovasz79}.
This type of convex quadratic programming approach was related with the Lov\'asz theta number of a graph in \cite{LuzSchrijver05} and \cite{Luz16}.
The graphs for which the convex quadratic upper bound is attained by the stability number were called in \cite{Cardoso01},
graphs with convex quadratic stability number or simply graphs with convex-$QP$ stability number, where $QP$ means quadratic programming.
The search for the recognition of graphs with convex-$QP$ stability number has produced several papers with several improvements on
the way of finding a polynomial-time algorithm for such recognition
\cite{Cardoso01, Cardoso03, CardosoCvetkovic06, CardosoLuz16, CaviqueLuz09, Luz95-2, LuzCardoso1}. Notice that, for every graph $G$, we may determine
in polynomial-time the optimal value of a convex quadratic programming problem associated to $G$ (see \eqref{Luz_UpperBound}, in Section~\ref{Convex-QP}),
and one of the following two cases can occur: either such optimal value coincides with the stability number $\alpha(G)$ or it is greater than $\alpha(G)$,
otherwise. Despite all the achievements, in general, we are not able to know in which of those two cases is the graph $G$, that is, deciding whether
$G$ has or not convex-$QP$ stability number remains as an open problem. So far, the nature of the approaches to attack this problem is either continuous
or combinatory or sometimes a mixture of both, making this line of research a challenge between continuous and discreet problems, as it can be seen
throughout the text.\\

This is an expository article surveying the main published results on graphs with convex-$QP$ stability number, including the proofs of some results.
The advances on the direction of the polynomial-time recognition of graphs with convex-$QP$ stability number are described and the open problems are
presented. \\

The remaining part of this section is devoted to the notation and basic definitions.
In Section~\ref{MS_Extensions} the relations between the classical Motzkin-Straus approach and its extensions with the convex quadratic
programming model $(P(G))$ introduced in \cite{Luz95} are analyzed. The main focus of Section~\ref{Convex-QP} is the characterization of
graphs with convex quadratic stability number. Furthermore, the properties of these graphs are analyzed, leading to the construction of
bridges between continuous and discrete optimization. In Section~\ref{Recognition}, several results related with the recognition of graphs
with convex quadratic stability number are presented, namely the main advances on the direction of an algorithmic strategy with which one
can attain the polynomial-time recognition of these graphs.
So far, such polynomial-time recognition has resisted to be completed solved and it is currently an interesting open question.
Despite this, two computational effective algorithmic approaches were considered: the one based on the determination of $(\kappa,\tau)$-regular
sets and the simplex-like approach based in the concepts of star set/star complement of the least eigenvalue of a graph. With such algorithms
we may recognize in polynomial-time if a graph has or not convex-$QP$ stability number when it belongs to particular families, as it is the
case of bipartite graphs among some other families \cite{CardosoLuz01, Cardoso03}. In fact, the polynomial-time recognition of graphs with
convex-$QP$ stability number is available for all the graphs with exception of graphs with a so called adverse subgraph for which there is a
conjecture.\\

We deal with undirected simple graphs $G=(V(G),E(G))$, where $V(G)$ denotes the nonempty set of vertices and $E(G)$ the set of edges.
It is also assumed that $G$ is of order $n \ge 1,$ that is, $|V(G)|=n \ge 1$ and its size is $|E(G)|$. An element of  $E(G)$, with end vertices $i$
and $j$, is denoted by $ij$ (or $ji$) and we say that the vertex $i$ is adjacent to the vertex $j.$ The neighborhood of a vertex $v \in V(G)$ is
$N_{G}(v) = \{w: vw \in E(G)\}$ and its degree is $d_{G}(v)=|N_{G}(v)|$. A graph of order $n$ in which all pairs of
vertices are adjacent is a complete graph $K_{n}.$ A bipartite graph $H$ is a graph such that its vertex set $V(H)$ can be partitioned
into non-empty vertex subsets $V_{1}$ and $V_{2}$ and every edge has one end vertex in $V_1$ and the other in $V_2$.
When $d_G(i)=|V_2|$ for all $i \in V_1$ (and then $d_G(j)=|V_1|$ for all $j \in V_2$) the bipartite graph is called complete
bipartite and it is denoted by $K_{pq}$, where $p=|V_1|$ and $q=|V_2|$. The graph $G$ is $p$-regular if $d_{G}(v) = p$ for all $v \in V(G)$.
Given a vertex subset $U \subset V(G),$ the subgraph of $G$ induced by $U,$ $G[U],$ is such that $V(G[U])=U$ and
$E(G[U])=\{ij: i,j \in U \wedge ij \in E(G)\}$ and $G-U$ is the graph obtained from $G$ after deleting all the vertices in $U$, that is,
if $T=V(G) \setminus U$, then $G-U=G[T]$.  The complement of a graph $G$ is the graph $\overline{G}$ such that $V(\overline{G})=V(G)$ and
$E(\overline{G}) = \{ij: i,j \in V(G) \wedge ij \not \in E(G)\}.$
The line graph of the graph $G,$ $L(G)$, is constructed by taking the edges of $G$ as vertices of $L(G)$ and joining two vertices in $L(G)$
by an edge, whenever the corresponding edges in $G$ have a common vertex. A vertex set $S$ is called a stable (clique) set if no (every) two
vertices are adjacent. A stable (clique) set $S$ is called maximum stable (clique) set if there is no other stable (clique) set with greater
number of vertices. The number of vertices in a maximum stable (clique) set of a graph $G$, is called the stability (clique) number of $G$
and it is denoted by $\alpha(G)$ ($\omega(G)$). It is immediate that $\alpha(G) = \omega(\overline{G})$ and then the determination of the
stability number is equivalent to the determination of the clique number. As proved in \cite{Karp72}, given a nonnegative integer $k$, to
determine if a graph $G$ has a stable set of cardinality $k$ is $NP$-complete and therefore, in general, the determination of $\alpha(G)$
(as well as $\omega(G)$) is a hard problem. However, there are several particular classes of graphs for which the stability number can be
determined  in polynomial-time as it is the case of perfect graphs \cite{Lovasz86}, claw-free graphs \cite{Minty80} and \cite{Sbihi80},
$(P_5,banner)$-free graphs \cite{Lozin00}, among many others. A matching in a graph $G$ is a subset of edges, $M \subseteq E(G),$ no two of
which have a common  vertex. A matching with maximum cardinality is designated maximum matching. Furthermore, if for each vertex $v \in V(G)$
there is one edge of the matching $M$ incident with $v,$ then $M$ is called a perfect matching. Notice that the determination of a maximum
stable set of a line graph $L(G)$ is equivalent to the determination of a maximum matching of $G.$ Based on the Edmonds maximum matching
algorithm \cite{Edmonds65}, there are several polynomial-time algorithms for the determination of a maximum matching of a graph. Given a
vertex subset $S$ of a graph $G,$ the vector $x \in \mathbb{R}^{V}$ with $x_{v}=1$ if $v \in S$ and $x_{v}=0$ if $v \notin S$ is called the
characteristic vector of $S.$  The adjacency matrix of a graph $G$ of order $n$ is the symmetric matrix $A_{G}=\left( a_{ij}\right)_{n\times n}$
such that
$$
a_{ij}=\left\{\begin{array}{cl}
                     1 & ,\mbox{if } ij \in E(G) \\
                     0 & ,\mbox{otherwise.}
              \end{array}
       \right.
$$
From the symmetry of the matrix $A_G$, it follows that all its $n$ eigenvalues, which will be denoted by $\lambda_1(G), \lambda_2(G), \dots, \lambda_n(G)$, are real.
As usually, throughout this text, they will be consider (with possible repetitions) in non increasing order, that is, $\lambda_1(G) \ge \lambda_2(G) \ge \dots \ge \lambda_n(G)$.
The least eigenvalue of $A_G$, $\lambda_n(G)$, it will be also denoted by $\lambda_{min}(G)$. The eigenvalues of $A_G$ are also called the eigenvalues of
$G$. It is worth mention that if a graph $G$ has at least one edge, then $\lambda_{min}(G) \leq -1.$ Actually, $\lambda_{min}(G)=0$ iff $G$ has no edges,
otherwise $\lambda_{min}(G)=-1$ iff every component is complete. In the other cases, $\lambda_{min}(G) \le -\sqrt{2}$ \cite{Doob82}.\\

For the remaining notation and concepts we refer the books \cite{BrouwerHaemers12} or \cite{CvetRowSim2010}.

\section{Extensions of the Motzkin-Straus approach to the determination of the clique number of graphs}\label{MS_Extensions}

Let us consider a graph $G$ of order $n$ and the family of quadratic programming problems
\begin{eqnarray}
\left(P_{G}(\tau)\right) & \qquad & \upsilon_{G}\left(\tau\right) = \max \{2\hat{e}^{T}x-x^{T}\left( \frac{1}{\tau}A_{G}+I_n \right)x: x \ge 0 \}, \label{quadratic_parametric_family}
\end{eqnarray}
with $\tau>0$, where $\hat{e}$ is the all one vector and $I_{n}$ is the identity matrix of order $n$.\\

The following result can be obtained using a similar proof to the one obtained in \cite[Th. 2]{Cardoso01}.

\begin{theorem}\label{optimality_condition_1}
If $x^{*}(\tau)$ is an optimal solution for $(P_{G}(\tau))$, then
\begin{equation}
\forall i \in V(G) \qquad [x^{*}(\tau)]_{i} = \max\{0,1 - \frac{aî_Gx^*}{\tau}\},\label{discrete optimality_condition}
\end{equation}
where $[x^{*}(\tau)]_{i}$ is the $i$-the component of $x^{*}(\tau)$ and $aî_G$ is the $i$-th row of the matrix $A_G$.
\end{theorem}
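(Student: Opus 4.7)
My plan is to derive the characterization \eqref{discrete optimality_condition} directly from the KKT first-order optimality conditions of the program $(P_G(\tau))$. Since the constraints $x \geq 0$ are affine, a constraint qualification holds at every feasible point, so the KKT conditions are necessary for any optimal $x^*(\tau)$, irrespective of whether the matrix $\frac{1}{\tau}A_G + I_n$ is positive semidefinite (that is, irrespective of whether the problem is globally concave).

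First I would compute the gradient. Writing $f(x) = 2\hat{e}^T x - x^T(\frac{1}{\tau}A_G + I_n)x$, one has $\tfrac{1}{2}\nabla f(x) = \hat{e} - (\frac{1}{\tau}A_G + I_n)x$, whose $i$-th component is $1 - [x]_i - \frac{1}{\tau}(A_G x)_i$. The KKT conditions for maximizing $f$ over the nonnegative orthant then read, for every $i$: $1 - [x^*]_i - \frac{1}{\tau}(A_G x^*)_i \leq 0$, $[x^*]_i \geq 0$, and the complementary-slackness relation $[x^*]_i\bigl(1 - [x^*]_i - \frac{1}{\tau}(A_G x^*)_i\bigr) = 0$. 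Next I would split into two coordinate-wise cases. If $[x^*(\tau)]_i > 0$, complementary slackness forces $[x^*(\tau)]_i = 1 - \frac{1}{\tau}(A_G x^*)_i$, and positivity of the left-hand side means this quantity equals $\max\{0,\, 1 - \frac{1}{\tau}(A_G x^*)_i\}$. If $[x^*(\tau)]_i = 0$, the first KKT inequality gives $1 - \frac{1}{\tau}(A_G x^*)_i \leq 0$, so $\max\{0,\, 1 - \frac{1}{\tau}(A_G x^*)_i\} = 0 = [x^*(\tau)]_i$. In both cases the claimed identity holds uniformly in $i$.

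There is no substantive obstacle here: the argument is entirely routine KKT analysis. The only point worth highlighting is that concavity of the objective is not required, because necessity of KKT at a maximizer only needs a constraint qualification, which is automatic for polyhedral feasibility. This is precisely why the proof scheme used in \cite[Th.~2]{Cardoso01}, which the author cites immediately before the statement, transfers verbatim to the parametric family $(P_G(\tau))$ for every $\tau > 0$.
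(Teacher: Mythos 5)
Your KKT derivation is correct: linearity of the constraints $x\ge 0$ guarantees necessity of the conditions $\hat{e}-\bigl(\tfrac{1}{\tau}A_G+I_n\bigr)x^*\le 0$, $x^*\ge 0$, ${x^*}^T\bigl(\hat{e}-(\tfrac{1}{\tau}A_G+I_n)x^*\bigr)=0$ without any concavity assumption, and the componentwise case split yields exactly \eqref{discrete optimality_condition}. This is essentially the same route as the paper, which refers the proof to \cite[Th.~2]{Cardoso01} and relies on precisely these KKT conditions (cf.\ \eqref{KKT_1}--\eqref{KKT_2} in the proof of Theorem~\ref{bomze_cardoso_result}).
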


When $\tau \ge - \lambda_{min}(G)$, the objective function of $(P_G(\tau))$ is convex and then the condition \eqref{discrete optimality_condition} is also
sufficient for the optimality of $x^{*}(\tau)$ \cite{Cardoso01}.

As a consequence of Theorem~\ref{optimality_condition_1}, it follows that $0 \le [x^{*}(\tau)]_{i} \leq 1 \;\; \forall i \in V(G)$ and thus
$
0 \le x^{*}(\tau) \le \hat{e} \; \Rightarrow \; 2\hat{e}^Tx^{*}(\tau) - {x^{*}(\tau)}^T\left(\frac{A_G}{\tau} + I_n\right)x^{*}(\tau) \le 2\hat{e}^Tx^{*}(\tau) - \|x^{*}(\tau)\|^2 \le n.
$
On the other hand, since $2\hat{e}^T\hat{e}_j - \hat{e}_j^T\left(\frac{A_G}{\tau} + I_n\right)\hat{e}_j=1$, where $\hat{e}_j$ denotes the $j$-th
vector of the canonical basis of $\mathbb{R}^n$, it follow that
\begin{equation}
\forall \tau>0 \qquad 1 \leq \upsilon_{G}(\tau) \leq n, \label{upsilon_bounds}
\end{equation}
with $\upsilon_{G}(\tau)=1$ if $G$ is a clique and $\upsilon_{G}(\tau)=n$ if $G$ has no edges.

\begin{theorem}\cite{CardosoLuz01}\label{cardoso_luz_th}
Consider a graph $G$ of order $n$ and the function $\upsilon_{G} : ]0, +\infty[ \; \mapsto  \; [1, n]$ such that $\upsilon(\tau)$ is defined in
\eqref{quadratic_parametric_family}. Then
\begin{enumerate}
\item $\forall \tau > 0 \;\; \alpha(G) \le \upsilon_{G}(\tau)$;\label{CL1}
\item $0 < \tau_{1} < \tau_{2} \; \Rightarrow \; \upsilon_{G}(\tau_{1}) \le \upsilon_{G}(\tau_{2})$;\label{CL2}
\item Assuming $\tau^{*}>0$ the following statements are equivalent:
      \begin{enumerate}
      \item $\exists \bar{\tau} \in ]0, \tau^{*}[$ such that $\upsilon_{G}(\bar{\tau})=\upsilon_{G}(\tau^{*});$
      \item $\upsilon_{G}(\tau^{*}) = \alpha(G)$;
      \item $\forall \bar{\tau} \in ]0,\tau^{*}] \;\; \upsilon_{G}(\bar{\tau}) = \alpha(G).$
      \end{enumerate}
\item $\forall U \subset V(G) \;\; \upsilon_{G-U}(\tau) \le \upsilon_{G}(\tau)$;
\end{enumerate}
\end{theorem}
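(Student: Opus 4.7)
My plan is to dispose of items (1), (2), and (4) through direct constructions together with the sign observation that $x^{T}A_{G}x \ge 0$ for every $x \ge 0$ (the entries of $A_G$ are nonnegative), and then reduce item (3) to the monotonicity from (2) combined with the structure of optimal solutions given by Theorem~\ref{optimality_condition_1}. The one genuinely nontrivial implication is in part (3).

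For (1), I plug the characteristic vector $\chi_{S}$ of a maximum stable set $S$ into the objective of $(P_{G}(\tau))$: stability forces $\chi_{S}^{T}A_{G}\chi_{S}=0$, while $\hat e^{T}\chi_{S}=\chi_{S}^{T}\chi_{S}=\alpha(G)$, so the value equals $\alpha(G)$. For (2), if $x^{*}$ attains $\upsilon_{G}(\tau_{1})$, then since $x^{*T}A_{G}x^{*}\ge 0$ and $1/\tau_{1}\ge 1/\tau_{2}$,
\begin{equation*}
\upsilon_{G}(\tau_{1}) = 2\hat e^{T}x^{*} - \|x^{*}\|^{2} - \frac{x^{*T}A_{G}x^{*}}{\tau_{1}}
\le 2\hat e^{T}x^{*} - \|x^{*}\|^{2} - \frac{x^{*T}A_{G}x^{*}}{\tau_{2}} \le \upsilon_{G}(\tau_{2}).
\end{equation*}
For (4), I extend an optimal $y^{*}$ of $(P_{G-U}(\tau))$ by zeros on the vertices of $U$; since $A_{G-U}$ is the corresponding principal submatrix of $A_{G}$, the extended vector is feasible for $(P_{G}(\tau))$ with the same objective value, giving $\upsilon_{G-U}(\tau)\le\upsilon_{G}(\tau)$.

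The main obstacle is part (3). The implications (c)$\Rightarrow$(a) and (c)$\Rightarrow$(b) are trivial, and (b)$\Rightarrow$(c) is a sandwich: for every $\bar\tau\in\,]0,\tau^{*}]$, parts (1) and (2) yield $\alpha(G)\le\upsilon_{G}(\bar\tau)\le\upsilon_{G}(\tau^{*})=\alpha(G)$. The real work is (a)$\Rightarrow$(b). Taking an optimal solution $x^{*}_{1}$ of $(P_{G}(\bar\tau))$ and evaluating it as a feasible point of $(P_{G}(\tau^{*}))$ gives
\begin{equation*}
\upsilon_{G}(\tau^{*}) \;\ge\; \upsilon_{G}(\bar\tau) + \left(\frac{1}{\bar\tau}-\frac{1}{\tau^{*}}\right)x^{*T}_{1}A_{G}x^{*}_{1}.
\end{equation*}
Since $1/\bar\tau-1/\tau^{*}>0$ and $x^{*T}_{1}A_{G}x^{*}_{1}\ge 0$, the equality $\upsilon_{G}(\bar\tau)=\upsilon_{G}(\tau^{*})$ forces $x^{*T}_{1}A_{G}x^{*}_{1}=0$. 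Consequently, no two vertices in the support $S$ of $x^{*}_{1}$ can be adjacent, so $S$ is stable.

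It remains to identify $x^{*}_{1}$ as $\chi_{S}$, which is where Theorem~\ref{optimality_condition_1} does the work. For each $i\in S$, stability of $S$ implies every neighbor of $i$ is outside $S$, so $a^{i}_{G}x^{*}_{1}=0$, and the necessary optimality condition \eqref{discrete optimality_condition} yields $[x^{*}_{1}]_{i}=1$; for $i\notin S$ the component is already $0$. Hence $x^{*}_{1}=\chi_{S}$, the objective value is $|S|$, and from $\upsilon_{G}(\bar\tau)\ge\alpha(G)$ (by (1)) together with $|S|\le\alpha(G)$ I conclude $\upsilon_{G}(\tau^{*})=\upsilon_{G}(\bar\tau)=|S|=\alpha(G)$, establishing (b).
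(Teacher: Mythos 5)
Your proposal is correct and follows essentially the same route as the paper: the characteristic vector argument for (1), monotonicity via the sign of $x^{T}A_{G}x$ for (2), zero-extension for (4), the sandwich $(b)\Rightarrow(c)$, and in $(a)\Rightarrow(b)$ forcing $x^{*T}A_{G}x^{*}=0$ so that the support of the optimal solution is a stable set. The only (minor) difference is that where the paper simply asserts that this support yields a maximum stable set, you justify the identification $x^{*}=\chi_{S}$ explicitly through the optimality condition of Theorem~\ref{optimality_condition_1} and then sandwich $|S|$ between $\alpha(G)$ and $\upsilon_{G}(\bar\tau)$, which fills in a detail the paper leaves implicit.
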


\begin{proof}
$\;$
\begin{enumerate}
\item Let $\bar{x}$ be the characteristic vector of a maximum stable set of $G$. Since $\bar{x}^TA_G\bar{x}=0$, it follows that
      $$
      2\hat{e}^T\bar{x} - \bar{x}^T\left(\frac{A_G}{\tau} + I_n\right)\bar{x} = 2\hat{e}^T\bar{x} - \|\bar{x}\|^2 = \alpha(G) \le \upsilon_G(\tau).
      $$
\item Assuming that $\tau_1 < \tau_2$, then  $\frac{x^T A_G x}{\tau_2} \le \frac{x^T A_G x}{\tau_1}$ and this inequality is equivalent to the inequality
      $2\hat{e}^T\bar{x} - \frac{\bar{x}^T A_G \bar{x}}{\tau_1} \le 2\hat{e}^T\bar{x} - \frac{\bar{x}^T A_G \bar{x}}{\tau_2}$. Therefore,
      $$
      \upsilon_G(\tau_1) = \max_{x \ge 0} 2\hat{e}^Tx - x^T \left(\frac{A_G}{\tau_1}+I_n\right)x \le \max_{x \ge 0} 2\hat{e}^Tx - x^T\left(\frac{A_G}{\tau_2}+I_n\right)x = \upsilon_G(\tau_2).
      $$
\item Let us prove the implications  $(a) \Rightarrow (b)$, $(b) \Rightarrow (c)$ and  $(c) \Rightarrow (a)$.
      \begin{itemize}
      \item[$\;$] ($(a) \Rightarrow (b)$).
                  Considering $\bar{\tau} \in ]0,\tau^*[$, let $x(\bar{\tau})$ be an optimal solution for $(P_G(\bar{\tau}))$. Then
                  $\upsilon_G(\bar{\tau}) = 2\hat{e}^Tx(\bar{\tau}) - x(\bar{\tau})^T \left(\frac{A_G}{\bar{\tau}}+I_n\right)x(\bar{\tau}) \le 2\hat{e}^Tx(\tau^*) - x(\tau^*)^T \left(\frac{A_G}{\tau^*}+I_n\right)x(\tau^*) \le \upsilon_G(\tau^*)$ and $\upsilon_{G}(\bar{\tau})=\upsilon_{G}(\tau^{*})$ implies
                  $$
                  \frac{x(\bar{\tau})^T A_Gx(\bar{\tau})}{\bar{\tau}} = \frac{x(\tau^*)^T A_G x(\tau^*)}{\tau^*}.
                  $$
                  Therefore, since $\bar{\tau} < \tau^*$ we obtain $x(\bar{\tau})^T A_Gx(\bar{\tau})=0$ which is equivalent to say that the support of $x(\bar{\tau})$ (that is, the subset of indices
                  $\{j: [x(\bar{\tau})]_j >0 \}$) defines the characteristic vector of a vertex subset $S$ which is a stable set of $G$. Since $x(\tau)$ is an optimal solution for $(P_G(\bar{\tau}))$,
                  then $S$ is a maximum stable set and thus $\upsilon_G(\tau^*)=\alpha(G)$.\\
      \item[$\;$] ($(b) \Rightarrow (c)$).
                  Considering $\upsilon_G(\tau^*)=\alpha(G)$, according to the item \eqref{CL2}, $\forall \tau \in ]0, \tau^*]$ it follows that $\alpha(G) \le \upsilon_G(\tau) \le \upsilon_G(\tau^*) = \alpha(G)$.\\
      \item[$\;$] ($(c) \Rightarrow (a)$). This implication is immediate.
      \end{itemize}
\item Let $U$ be a vertex subset of $G$ and let $\bar{x}$ be an optimal solution for $(P_{G-U}(\tau))$. Let $x \in \mathbb{R}^n$ such that
      $$
      x_i = \left\{\begin{array}{ll}
                   \bar{x}_i, & \hbox{if } i \not \in U;\\
                    0, & \hbox{otherwise.}
                   \end{array}\right.
      $$
      Then $\upsilon_{G-U}(\tau) = 2\hat{e}^T x - x^T\left(\frac{A_G}{\tau} + I_n\right)x \le \upsilon_G(\tau)$.
\end{enumerate}
\end{proof}

From the above properties, we may conclude that for any graph $G$, $\upsilon_{G}(\tau)$ is a monotone upper bound on the
stability number of $G$. The Figure~\ref{figura1} illustrates the graph of the function $\upsilon_{G}(\tau),$ obtained for the
graph $G$ depicted in Figure~\ref{figura2}, for which $\alpha(G) = 4$. Both figures appear in \cite{CardosoLuz01}. \\

\begin{figure}[h]
\centerline{\epsfig{file=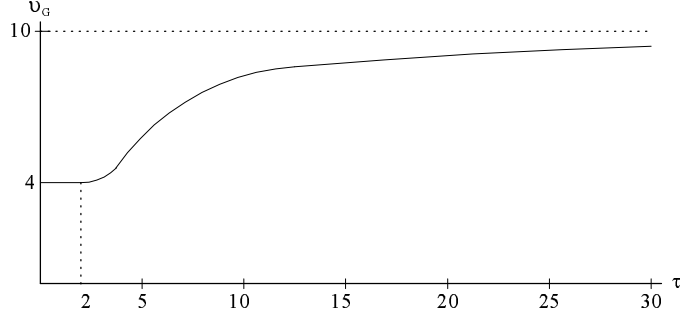,width=9cm}}
\caption{Graph of $\upsilon_{G}(\tau)$, where $G$ is the graph depicted in Figure~\ref{figura2}, for which
         $\alpha(G)=4.$ }\label{figura1}
\end{figure}

\begin{figure}[ht]
\begin{center}
\unitlength=0.25 mm
\begin{picture}(100,150)(-120,40)
%
%
%
\put(-140,110){\circle*{7}} 
\put(-100,110){\circle*{7}} 
\put(-60,110){\circle*{5}}  
\put(-20,110){\circle*{7}}  
\put(20,110){\circle*{7}}   
\put(20,170){\circle*{5}}   
\put(-20,170){\circle*{5}}  
\put(-100,170){\circle*{5}} 
\put(-140,170){\circle*{5}} 
\put(-60,55){\circle*{5}}   
%
\put(-140,100){\makebox(0,0){$a$}}
\put(-100,100){\makebox(0,0){$b$}}
\put(-60,120){\makebox(0,0){$c$}}
\put(-20,100){\makebox(0,0){$d$}}
\put(20,100){\makebox(0,0){$e$}}
\put(20,180){\makebox(0,0){$f$}}
\put(-20,180){\makebox(0,0){$g$}}
\put(-100,180){\makebox(0,0){$i$}}
\put(-140,180){\makebox(0,0){$j$}}
\put(-60,45){\makebox(0,0){$h$}}
%
\put(-140,170){\line(0,-1){60}} 
\put(-140,110){\line(2,3){40}}  
\put(-100,170){\line(0,-1){60}} 
\put(-100,110){\line(-2,3){40}} 
\put(-140,170){\line(1,0){40}}  
\put(-20,110){\line(0,1){60}}   
\put(20,110){\line(0,1){60}}    
\put(20,110){\line(-2,3){40}}   
\put(-20,110){\line(2,3){40}}   
\put(-20,170){\line(1,0){40}}   
\put(-100,110){\line(1,0){80}}  
\put(-60,55){\line(0,1){55}}    
\put(-60,55){\line(3,2){80}}    
\put(-60,55){\line(-3,2){80}}   
\end{picture}
\caption{Graph $G$ of order $10$ with $\upsilon_{G}(2)=\alpha(G)=4$.} \label{figura2}
\end{center}
\end{figure}
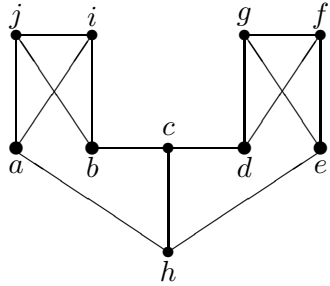

Before to proceed, let us define the family of quadratic programs
\begin{equation}
\left(Q_{G}(\tau)\right) \qquad  \nu_{G}\left(\tau\right) = \min \{z^{T}\left(\frac{A_{G}}{\tau}+I_{n}\right)z: \hat{e}^{T}z=1, z \geq 0 \}, \label{Q_quadratic_family}
\end{equation}
with $\tau>0.$\\

The next theorem which, appears in \cite{CardosoLuz01}, allows to conclude that the indefinite quadratic program of Motzkin-Straus \cite{MotzkinStraus65} is a particular case of the family of quadratic programming problems
($P_{G}(\tau)$).

\begin{theorem}\label{bomze_cardoso_result}
If $x^{*}$ and $z^{*}$ are optimal solutions for $P_{G}(\tau)$ and $Q_{G}(\tau),$ respectively, then $\frac{z^{*}}{\nu_{G}(\tau)}$ and $\frac{x^{*}}{\upsilon_{G}(\tau)}$ are optimal
solutions for $P_{G}(\tau)$ and $Q_{G}(\tau),$ respectively, and $\upsilon_{G}(\tau) = \frac{1}{\nu_{G}(\tau)}.$
\end{theorem}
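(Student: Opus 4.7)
The plan is to exploit the homogeneity/scaling structure that links the two quadratic programs: $P_G(\tau)$ has a linear term in the objective but no normalization constraint, while $Q_G(\tau)$ has no linear term but the simplex constraint $\hat{e}^T z = 1$. Writing any nonzero feasible $x$ for $P_G(\tau)$ as $x = t\,z$ with $t = \hat{e}^T x > 0$ and $z \in \{z \geq 0 : \hat{e}^T z = 1\}$, the objective of $P_G(\tau)$ becomes $2t - t^2 z^T(A_G/\tau + I_n)z$, which is the quantity that couples the two problems. A preliminary observation is that $\nu_G(\tau) > 0$, since $A_G$ has nonnegative entries (so $z^T A_G z \geq 0$ for $z \geq 0$) and $\hat{e}^T z = 1$ forces $z^T z > 0$; this justifies dividing by $\nu_G(\tau)$ throughout.

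First I would prove $\upsilon_G(\tau) \leq 1/\nu_G(\tau)$. For arbitrary feasible $x \geq 0$ with $t = \hat{e}^T x > 0$, using $z = x/t$ as a feasible point for $Q_G(\tau)$ gives $z^T(A_G/\tau + I_n)z \geq \nu_G(\tau)$, so the objective of $P_G(\tau)$ at $x$ is at most $2t - t^2 \nu_G(\tau)$. The univariate maximum of $2t - t^2 \nu_G(\tau)$ over $t > 0$ equals $1/\nu_G(\tau)$, attained at $t = 1/\nu_G(\tau)$. For the reverse inequality, I plug the candidate $x := z^*/\nu_G(\tau)$ directly into the objective of $P_G(\tau)$; a two-line computation gives objective value exactly $1/\nu_G(\tau)$, so $\upsilon_G(\tau) \geq 1/\nu_G(\tau)$. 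Combining the inequalities proves $\upsilon_G(\tau) = 1/\nu_G(\tau)$ and simultaneously certifies that $z^*/\nu_G(\tau)$ is optimal for $P_G(\tau)$.

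To show that $x^*/\upsilon_G(\tau)$ is optimal for $Q_G(\tau)$, I would first establish the identity $\upsilon_G(\tau) = \hat{e}^T x^*$. This uses Theorem~\ref{optimality_condition_1}: its component-wise formula implies that on the support of $x^*$ we have $[(A_G/\tau + I_n)x^*]_i = 1$, so $(x^*)^T(A_G/\tau + I_n)x^* = \sum_{i:\, x^*_i > 0} x^*_i = \hat{e}^T x^*$. Substituting this into the objective of $P_G(\tau)$ at $x^*$ yields $\upsilon_G(\tau) = 2\hat{e}^T x^* - \hat{e}^T x^* = \hat{e}^T x^*$. Consequently $x^*/\upsilon_G(\tau)$ lies on the simplex, and
$$
\Bigl(\frac{x^*}{\upsilon_G(\tau)}\Bigr)^{\!T}\!\Bigl(\frac{A_G}{\tau}+I_n\Bigr)\frac{x^*}{\upsilon_G(\tau)} = \frac{\hat{e}^T x^*}{\upsilon_G(\tau)^2} = \frac{1}{\upsilon_G(\tau)} = \nu_G(\tau),
$$
so this rescaled vector is feasible for $Q_G(\tau)$ and achieves the minimum value.

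The one step that requires a little care, rather than being purely mechanical, is deriving the identity $\upsilon_G(\tau) = \hat{e}^T x^*$ from Theorem~\ref{optimality_condition_1}; everything else reduces to the scaling trick $x = tz$ and elementary calculus in one variable. One should also briefly dispense with the degenerate case where the maximizer of $P_G(\tau)$ could be zero, which is excluded by the lower bound $\upsilon_G(\tau) \geq 1$ from \eqref{upsilon_bounds}.
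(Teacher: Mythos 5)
Your proof is correct, but its first half takes a genuinely different route from the paper's. The paper begins by applying the Karush--Kuhn--Tucker conditions to $(P_G(\tau))$ to obtain the identity ${x^*}^T\left(\frac{A_G}{\tau}+I_n\right)x^*=\hat{e}^Tx^*=\upsilon_G(\tau)$, and then derives everything by exchanging the two candidate solutions: $\frac{x^*}{\upsilon_G(\tau)}$ is tested against the optimality of $z^*$ in $(Q_G(\tau))$, and $\upsilon_G(\tau)z^*$ against the optimality of $x^*$ in $(P_G(\tau))$, with $\upsilon_G(\tau)=\frac{1}{\nu_G(\tau)}$ falling out of these two comparisons. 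You instead prove the value equality first and by elementary means: the homogenization $x=tz$ with $t=\hat{e}^Tx$ plus one-variable maximization of $2t-t^2\nu_G(\tau)$ gives $\upsilon_G(\tau)\le\frac{1}{\nu_G(\tau)}$, and direct substitution of $z^*/\nu_G(\tau)$ gives the reverse inequality together with the optimality of $z^*/\nu_G(\tau)$ for $(P_G(\tau))$; this half needs neither KKT nor the existence or optimality of $x^*$, which is a real simplification. For the remaining claim you rejoin the paper's route: the identity $\upsilon_G(\tau)=\hat{e}^Tx^*$, which you extract from Theorem~\ref{optimality_condition_1} (itself the componentwise form of the KKT conditions the paper writes explicitly), is what makes $x^*/\upsilon_G(\tau)$ feasible for $(Q_G(\tau))$ with value $\frac{1}{\upsilon_G(\tau)}=\nu_G(\tau)$. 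It is worth noting that even this last step could stay inside your scaling framework: writing $x^*=t^*z$ with $z$ on the simplex, the chain $\upsilon_G(\tau)=2t^*-(t^*)^2z^T\left(\frac{A_G}{\tau}+I_n\right)z\le 2t^*-(t^*)^2\nu_G(\tau)\le\frac{1}{\nu_G(\tau)}=\upsilon_G(\tau)$ forces equality throughout, yielding $t^*=\upsilon_G(\tau)$ and the optimality of $x^*/\upsilon_G(\tau)$ for $(Q_G(\tau))$ without invoking Theorem~\ref{optimality_condition_1} at all. Your handling of the degenerate case $x=0$ via \eqref{upsilon_bounds} and of the positivity of $\nu_G(\tau)$ is appropriate.
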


\begin{proof}
Assume that $x^{*}$ and $z^{*}$ are optimal solutions for $P_{G}(\tau)$ and $Q_{G}(\tau),$ respectively.
Applying the Karush-Khun-Tucker optimality conditions to the quadratic program $(P_{G}(\tau))$, there exists $y^* \ge 0$ such that
\begin{eqnarray}
A_G x^* &=& \tau(\hat{e} - x^*) + y^*, \label{KKT_1}\\
{x^*}^T y^* &=& 0. \label{KKT_2}
\end{eqnarray}
Then ${x^*}^T\left(\frac{A_G}{\tau}+I_n\right)x^*=\hat{e}^Tx^*=\upsilon_G(\tau)$ and thus $\frac{1}{\upsilon_G(\tau)}=\frac{{x^*}^T}{\upsilon_G(\tau)}\left(\frac{A_G}{\tau}+I_n\right)\frac{x^*}{\upsilon_G(\tau)}.$\\
On the other hand, taking into account that $\frac{x^*}{\upsilon_G(\tau)}$ is feasible for $(Q_G(\tau))$, from the optimality of $z^*$ for $(Q_G(\tau))$, we have
${z^*}^T\left(\frac{A_G}{\tau}+I_n\right)z^* \le \frac{{x^*}^T}{\upsilon_G(\tau)}\left(\frac{A_G}{\tau}+I_n\right)\frac{x^*}{\upsilon_G(\tau)}$ and thus
$\upsilon_G(\tau){z^*}^T\left(\frac{A_G}{\tau}+I_n\right)\upsilon_G(\tau)z^* \le {x^*}^T\left(\frac{A_G}{\tau}+I_n\right)x^*$ which is equivalent to
\begin{eqnarray*}
\upsilon_G(\tau) &  =  & 2\hat{e}^Tx^*-{x^*}^T\left(\frac{A_G}{\tau}+I_n\right)x^*\\
                 & \le & 2\hat{e}^T(\upsilon_G(\tau)z^*)-(\upsilon_G(\tau)z^*)^T\left(\frac{A_G}{\tau}+I_n\right)(\upsilon_G(\tau)z^*),
\end{eqnarray*}
since $\hat{e}^T x^* = \upsilon_G(\tau)$ and ${\hat{e}}^T z^* = 1$. Therefore, $\upsilon_G(\tau)z^*$ is an optimal solution for $(P_G(\tau))$ and
$\upsilon_G(\tau) = (\upsilon_G(\tau))^2{z^*}^T\left(\frac{A_G}{\tau}+I_n\right)z^* \Leftrightarrow \upsilon_G(\tau) = \frac{1}{\nu_G(\tau)}$ and
thus $\frac{z^*}{\nu_G(\tau)}$ is an optimal solution for $(P_G(\tau))$.
On the other hand, the optimality of $x^*$ for $(P_G(\tau))$ implies
\begin{eqnarray*}
\upsilon_G(\tau) &  =  & 2\hat{e}^Tx^* -{x^*}^T\left(\frac{A_G}{\tau}+I_n\right)x^*\\
                 & \ge & 2\hat{e}^T(\upsilon_G(\tau)z^*)-(\upsilon_G(\tau)z^*)^T\left(\frac{A_G}{\tau}+I_n\right)(\upsilon_G(\tau)z^*).
\end{eqnarray*}
Since $\hat{e}^Tx^* = \hat{e}^T(\upsilon_G(\tau)z^*)$, we have
$$(\upsilon_G(\tau)z^*)^T\left(\frac{A_G}{\tau}+I_n\right)(\upsilon_G(\tau)z^*) \ge {x^*}^T\left(\frac{A_G}{\tau}+I_n\right)x^*$$
which is equivalent to $\nu_G(\tau) = {z^*}^T\left(\frac{A_G}{\tau}+I_n\right)z^*  \ge
           \frac{{x^*}^T}{\upsilon_G(\tau)}\left(\frac{A_G}{\tau}+I_n\right)\frac{x^*}{\upsilon_G(\tau)} = \frac{1}{\upsilon_G(\tau)}$.
\end{proof}

The proof of Theorem~\ref{bomze_cardoso_result} could be obtained applying Theorem 5 in  \cite{Bomze} to the quadratic programming problems
$(P_G(\tau))$ and $(Q_G(\tau))$.\\

Considering an arbitrary graph $G$ of order $n$, Motzkin and Straus in \cite{MotzkinStraus65} proved the following result.

\begin{theorem}\label{Theorem_motzkin_straus}\cite{MotzkinStraus65}
If $G$ is a graph of order $n$ and $\Delta = \{ x \ge 0: \sum_{j=1}^n{x_j} = 1 \}$, then
$\max_{x \in \Delta} x^{T}A_{G}x = 1 - \frac{1}{\omega(G)}.$
\end{theorem}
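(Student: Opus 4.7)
The plan is to prove both inequalities separately, starting with the easier lower bound and then tackling the upper bound by showing that an optimal solution may be chosen to be supported on a clique.

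For the lower bound, I would let $C$ be a maximum clique of $G$, so $|C| = \omega(G)$, and take $\tilde{x} \in \Delta$ defined by $\tilde{x}_i = 1/\omega(G)$ if $i \in C$ and $\tilde{x}_i = 0$ otherwise. A direct computation gives
\begin{equation*}
\tilde{x}^{T}A_{G}\tilde{x} \;=\; \sum_{\substack{i,j \in C \\ i \neq j}} \frac{1}{\omega(G)^{2}} \;=\; \frac{\omega(G)(\omega(G)-1)}{\omega(G)^{2}} \;=\; 1 - \frac{1}{\omega(G)}.
\end{equation*}
This shows $\max_{x \in \Delta} x^T A_G x \geq 1 - 1/\omega(G)$.

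For the upper bound, the key step I expect to be the main obstacle is reducing an arbitrary optimizer to one whose support is a clique. I would fix an optimal solution $x^{*}$ and, among all optimal solutions, choose one with minimal support $S = \{i : x^{*}_{i} > 0\}$. Suppose for contradiction that $S$ is not a clique, so there exist non-adjacent $i, j \in S$. Consider the perturbation $x^{*} + t(\hat{e}_{i} - \hat{e}_{j})$ for $t \in [-x^{*}_{i}, x^{*}_{j}]$, which remains in $\Delta$. Since $a_{ii} = a_{jj} = a_{ij} = 0$, the quadratic term $(\hat{e}_{i}-\hat{e}_{j})^{T} A_{G}(\hat{e}_{i}-\hat{e}_{j})$ vanishes, so the objective becomes the linear function $t \mapsto {x^{*}}^{T}A_{G}x^{*} + 2t\bigl((A_{G}x^{*})_{i} - (A_{G}x^{*})_{j}\bigr)$. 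Moving $t$ to whichever endpoint does not decrease the objective (possible because the function is linear) yields an optimal solution with strictly smaller support, contradicting minimality. Hence $S$ is a clique, and in particular $|S| \leq \omega(G)$.

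Finally, when $S$ is a clique I would rewrite
\begin{equation*}
{x^{*}}^{T} A_{G} x^{*} \;=\; \sum_{\substack{i,j \in S \\ i \neq j}} x^{*}_{i} x^{*}_{j} \;=\; \Bigl(\sum_{i \in S} x^{*}_{i}\Bigr)^{2} - \sum_{i \in S} (x^{*}_{i})^{2} \;=\; 1 - \sum_{i \in S}(x^{*}_{i})^{2},
\end{equation*}
and apply the Cauchy-Schwarz inequality in the form $\sum_{i \in S}(x^{*}_{i})^{2} \geq \frac{1}{|S|}\bigl(\sum_{i \in S} x^{*}_{i}\bigr)^{2} = \frac{1}{|S|}$ to conclude
\begin{equation*}
{x^{*}}^{T} A_{G} x^{*} \;\leq\; 1 - \frac{1}{|S|} \;\leq\; 1 - \frac{1}{\omega(G)}.
\end{equation*}
Combining both bounds gives the claimed equality. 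The delicate point is the clique-support reduction; everything else is essentially a symmetry argument and an application of the QM-AM inequality.
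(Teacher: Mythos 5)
Your proof is correct. Note that the survey itself states this theorem only as a cited classical result of Motzkin and Straus and gives no proof, so there is nothing in the paper to compare against; your argument is the standard one for this theorem. Both halves check out: the uniform vector on a maximum clique gives the lower bound, and the support-reduction step is sound --- since $a_{ii}=a_{jj}=a_{ij}=0$ for non-adjacent $i,j$ in the support, the objective is affine in $t$ along $x^{*}+t(\hat{e}_{i}-\hat{e}_{j})$, so an endpoint of the feasible interval is again optimal with strictly smaller support, forcing a minimal-support optimizer to be supported on a clique, after which the identity ${x^{*}}^{T}A_{G}x^{*}=1-\sum_{i\in S}(x^{*}_{i})^{2}$ and the QM-AM (Cauchy--Schwarz) bound finish the upper estimate.
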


Assuming that the graph $G$ has order $n$ and size $m$, from Teorema~\ref{Theorem_motzkin_straus}, setting $x_i=1/n$, for $i=1, \ldots, n$,
it follows
$
1 - \frac{1}{\omega(G)} \ge \frac{2m}{n^2}.
$
Therefore, in the particular case of triangle-free graphs (graphs without triangles) we obtain the inequality $m \le \frac{n^2}{4}$. This upper
bound on the size of $G$ coincides with the extremal value on the number of edges of triangle-free graphs given by the classical Mantel's theorem
and extended by Tur\'an who, with the well known Turan's graph theorem \cite{Turan41}, started the Theory of Extremal Graphs.

\begin{theorem}\cite{Turan41}
If $G$ is a $K_q$-free graph, with $q>1$, then $m \le \frac{(q-2)}{2(q-1)}n^2$.
\end{theorem}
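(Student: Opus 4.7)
The plan is to derive Turán's bound directly from the Motzkin--Straus theorem (Theorem~\ref{Theorem_motzkin_straus}), mimicking exactly the argument the paper just used for the triangle-free case but with a sharper bound on $\omega(G)$.

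First I would observe that the $K_q$-freeness assumption translates directly to $\omega(G) \le q-1$. Since the function $t \mapsto 1 - \tfrac{1}{t}$ is increasing in $t$, this yields
\[
\max_{x \in \Delta} x^T A_G x \;=\; 1 - \frac{1}{\omega(G)} \;\le\; 1 - \frac{1}{q-1} \;=\; \frac{q-2}{q-1}.
\]

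Next, I would test this inequality on the uniform vector $x = \tfrac{1}{n}\hat{e} \in \Delta$, exactly as done for the triangle-free case. A direct computation gives
\[
x^T A_G x \;=\; \frac{1}{n^2} \hat{e}^T A_G \hat{e} \;=\; \frac{2m}{n^2},
\]
since $\hat{e}^T A_G \hat{e}$ counts each edge twice. Combining this with the inequality from the previous step immediately gives $\tfrac{2m}{n^2} \le \tfrac{q-2}{q-1}$, which rearranges to the claimed bound $m \le \tfrac{q-2}{2(q-1)} n^2$.

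There is no real obstacle here: the entire proof rides on Motzkin--Straus plus evaluation at a single feasible point, and the paper has already set up the $q=3$ instance as a worked example. The only thing to watch is the direction of the inequality in $1 - \tfrac{1}{\omega(G)} \le 1 - \tfrac{1}{q-1}$ (needing $\omega(G) \ge 1$, which is harmless since $q > 1$ and the bound is trivially true for the edgeless case $m = 0$).
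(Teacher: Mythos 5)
Your proof is correct and follows essentially the same route the paper sets up: the paper states Tur\'an's bound without proof (citing \cite{Turan41}), but directly beforehand it derives the triangle-free case $m \le n^2/4$ by evaluating the Motzkin--Straus maximum at $x=\hat{e}/n$, and your argument is exactly that computation combined with the observation $\omega(G)\le q-1$ for $K_q$-free graphs. Nothing is missing; the only point requiring care, which you already flagged, is that the step $1-\tfrac{1}{\omega(G)} \le 1-\tfrac{1}{q-1}$ uses $1 \le \omega(G) \le q-1$, which holds for every graph of order $n\ge 1$ under the hypothesis $q>1$.
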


Taking into account the Motzkin-Straus quadratic model for the determination of the clique number of a graph
(Theorem~\ref{Theorem_motzkin_straus}), it follows that
\begin{eqnarray*}
\frac{1}{2}(1-\frac{1}{\omega(G)}) &=& \max_{x \in \Delta} \frac{1}{2}x^{T}A_{G}x = \max_{x \in \Delta} \sum_{ij \in E(G)}{x_ix_j} =
                                       \max_{x \in \Delta} \sum_{ij \in E(K_n)}{x_ix_j} - \sum_{rs \in E(\overline{G})}{x_rx_s}\\
                                   &=& \max_{x \in \Delta} \frac{1}{2}[(\sum_{j=1}^{n}{x_j})^2 - ||x||^2] - \sum_{rs \in E(\overline{G})}{x_rx_s} =
                                       \max_{x \in \Delta} \frac{1}{2}(1-||x||^2 - x^{T}A_{\overline{G}}x)\\
                                   &=& \frac{1}{2} - \min_{x \in \Delta} \frac{1}{2} x^{T}(A_{\overline{G}}+I_n)x
\end{eqnarray*}
and then
$
\min_{x \in \Delta} x^{T}(A_{\overline{G}}+I_n)x = \frac{1}{\omega(G)}
                       \Leftrightarrow  \min_{x \in \Delta} x^{T}(A_{\overline{G}}+I_n)x = \frac{1}{\alpha(\overline{G})}
$
Therefore, the indefinite Motzkin-Straus quadratic model for the determination of the clique number of the complement
$\overline{G}$ of a graph $G$ is equivalent to the indefinite quadratic model for the determination of the stability number
\begin{eqnarray}
\frac{1}{\min\{x^{T}(A_{G}+I_{n})x: \hat{e}^{T}x=1, x \geq 0\}} = \alpha(G). \label{motzkin_straus}
\end{eqnarray}
(see\cite[Prop. 2]{Gibbons_et_al97}). Combining \eqref{motzkin_straus} with Theorem~\ref{bomze_cardoso_result} we have
\begin{equation}
\upsilon_{G}(1) = \frac{1}{\nu_{G}(1)} = \alpha(G). \label{relations_with_MS}
\end{equation}

As a consequence of \eqref{relations_with_MS}, if $x^*$ is an optimal solution of $(P_G(\tau))$, with $\tau \ge 1$, then
$$
2\hat{e}^Tx^* - {x^*}^T\left(A_G+I_n\right)x^* \le \upsilon_G(1) = \alpha(G) \le \upsilon_G(\tau),
$$
and since $\upsilon_G(\tau) = \hat{e}^Tx^* = {x^*}^T\left(\frac{A_G}{\tau}+I_n\right)x^*$, it follows that
$\upsilon_G(\tau) - \frac{\tau-1}{\tau}{x^*}^T A_G x^* \le \alpha(G) \le \upsilon_G(\tau)$. On the other hand, from \eqref{motzkin_straus},
$\frac{1}{\frac{{x^*}^T}{\upsilon_G(\tau)}\left(A_G+I_n\right)\frac{x^*}{\upsilon_G(\tau)}} \le \alpha(G)$. Therefore,
\begin{eqnarray}
||x^{*}||^{2} - (\tau - 2)(\upsilon_{G}(\tau) - ||x^{*}||^{2}) \leq &\! \alpha(G)  & \leq \upsilon_{G}(\tau) \label{lower_bound_1}
\end{eqnarray}
and
\begin{eqnarray}
\frac{\upsilon_{G}(\tau)^{2}}{x^{*T}(A_{G}+I_{n})x^{*}} \leq \!& \alpha(G) & \leq \upsilon_{G}(\tau). \label{lower_bound_2}
\end{eqnarray}

The lower bounds in (\ref{lower_bound_1}) and (\ref{lower_bound_2}) were obtained in \cite{Cardoso01} and \cite{CardosoLuz01}, respectively.

\section{Characterization and properties of convex-$QP$ graphs}\label{Convex-QP}

As already referred in the last section, considering a graph $G$ with at least one edge, when $\tau \ge -\lambda_{min}(G)$  the quadratic
program $(P_G(\tau))$ is convex. Since $\upsilon_G(\tau)$ is non decreasing on the variable $\tau$, with the aim of obtaining a polynomial-time
upper bound on the stability number, does not make sense to consider values of $\tau$ greater than $-\lambda_{min}(G)$ (notice that
$\lambda_{min}(G)$ is negative for graphs with at least one edge). The convex quadratic upper bound $\upsilon_{G}(-\lambda_{min}(G))$ was
firstly introduced in 1995 by Luz in \cite{Luz95}. From now on, $(P_G(-\lambda_{min}(G))$ and its optimal value $\upsilon_{G}(-\lambda_{min}(G))$
will be simple denoted by $(P(G))$ and $\upsilon(G)$, respectively, that is,
\begin{equation}
(P(G)) \qquad \upsilon(G) = \max \{2\hat{e}^{T}x - x^T\left(H + I\right)x: x \ge 0\}, \label{Luz_UpperBound}
\end{equation}
where $H = \frac{A_G}{-\lambda_{min}(G)}$.\\

It is immediate that $H$ is a positive semidefinite matrix and thus the quadratic program \eqref{Luz_UpperBound} is convex. According to
Theorem~\ref{cardoso_luz_th}-\eqref{CL1}, $\alpha(G) \le \upsilon(G)$, that is, $\upsilon(G)$ is convex quadratic upper bound on the stability
number of $G$. When $G$ is a regular graph of order $n$ with at least one edge, as proved in \cite{Luz95, LuzCardoso1},
$\upsilon(G) = n\frac{-\lambda_n(G)}{\lambda_1(G)-\lambda_n(G)}$ which is the  very popular upper bound on the stability number of regular graphs
obtained by Hoffman (unpublished) and presented by Lov\'asz in  \cite{Lovasz79} as follows.
\begin{equation}
\alpha(G) \le n\frac{-\lambda_n(G)}{\lambda_1(G)-\lambda_n(G)}. \label{Hoffman_bound}
\end{equation}

Now we introduce a necessary and sufficient condition, proved in \cite{Luz95}, for the the stability number of a graph be attained by the upper bound \eqref{Luz_UpperBound}.

\begin{theorem}\cite{Luz95}
Let $G$ be a graph with at least one edge. Then $\alpha(G) = \upsilon(G)$ if and only if for a maximum stable set $S$ of $G$
(and then for all),
\begin{eqnarray}
-\lambda_{min}(G) & \le & \min \{|N_G(i) \cap S|: i \not \in S\}. \label{Luz_Condition}
\end{eqnarray}
\end{theorem}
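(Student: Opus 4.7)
The plan is to apply Theorem~\ref{optimality_condition_1} directly to the characteristic vector of a maximum stable set. Because the choice $\tau = -\lambda_{min}(G)$ makes $H+I$ positive semidefinite, the condition \eqref{discrete optimality_condition} is, as noted in the remark after that theorem, both necessary \emph{and} sufficient for optimality in $(P(G))$. The whole argument then reduces to reading off what \eqref{discrete optimality_condition} says for this particular vector.

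First I would fix a maximum stable set $S$ and let $\bar{x}$ be its characteristic vector. Since $S$ is stable, $(A_G\bar{x})_i = |N_G(i)\cap S|$ vanishes for every $i \in S$, hence $\bar{x}^T A_G \bar{x}=0$ and the objective value of $(P(G))$ at $\bar{x}$ equals $2|S|-|S|=\alpha(G)$. This reproves $\alpha(G)\le\upsilon(G)$ and, crucially, shows that $\bar{x}$ is optimal for $(P(G))$ if and only if $\upsilon(G)=\alpha(G)$.

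Next I would check \eqref{discrete optimality_condition} component by component with $\tau=-\lambda_{min}(G)$. For $i\in S$ the right-hand side is $\max\{0,1-0/\tau\}=1=\bar{x}_i$, so the condition is automatic. For $i\notin S$ it becomes $0=\max\{0,1-|N_G(i)\cap S|/(-\lambda_{min}(G))\}$, which is equivalent to $|N_G(i)\cap S|\ge -\lambda_{min}(G)$, i.e.\ to \eqref{Luz_Condition}. Thus $\bar{x}$ satisfies the KKT condition (equivalently, is optimal for $(P(G))$) if and only if \eqref{Luz_Condition} holds for $S$.

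Combining the two observations yields the stated equivalence: $\upsilon(G)=\alpha(G)$ iff \eqref{Luz_Condition} holds for the chosen maximum stable set $S$. The "and then for all" clause is automatic, because once one direction gives $\upsilon(G)=\alpha(G)$, the same computation applied to any other maximum stable set $S'$ shows that its characteristic vector also attains the maximum, hence satisfies \eqref{discrete optimality_condition} and therefore \eqref{Luz_Condition}. There is no serious obstacle here; the only point requiring care is the appeal to sufficiency of \eqref{discrete optimality_condition}, which is legitimate precisely because the convexification at $\tau=-\lambda_{min}(G)$ has been performed.
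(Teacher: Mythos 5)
Your proof is correct, and it is essentially the intended argument: the survey itself states this theorem without proof (citing \cite{Luz95}), but the route you take --- evaluating the necessary-and-sufficient optimality condition \eqref{discrete optimality_condition} (sufficient here because $\tau=-\lambda_{min}(G)$ makes $(P(G))$ convex) at the characteristic vector of a maximum stable set, and noting that this vector is optimal precisely when $\upsilon(G)=\alpha(G)$ --- is exactly the KKT-based derivation the paper's Theorem~\ref{optimality_condition_1} and the remark following it are set up for, and it matches how the original result is established. The componentwise check (value $1$ forced on $S$, and $|N_G(i)\cap S|\ge-\lambda_{min}(G)$ forced off $S$) and the ``and then for all'' clause are handled correctly, so there is nothing to fix.
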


A slight modified version is the following.

\begin{theorem}\cite{CardosoCvetkovic06}\label{cardoso_cvetkovic_th}
Let $G$ be a graph with at least one edge. Then $\alpha(G) = \upsilon(G)$ if and only if there exists a stable set $S$ for which
\eqref{Luz_Condition} holds.
\end{theorem}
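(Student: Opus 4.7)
The plan is to split the equivalence in the obvious way. The forward direction is immediate: assuming $\alpha(G) = \upsilon(G)$, the original theorem of Luz already guarantees that every maximum stable set satisfies \eqref{Luz_Condition}, so taking $S$ to be any maximum stable set produces the required witness. All the work is in the converse.

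For the converse, I would aim to show that any stable set $S$ satisfying \eqref{Luz_Condition} is automatically a maximum stable set and, moreover, that its characteristic vector $x_S$ is optimal for $(P(G))$. The guiding idea: since $(P(G))$ is a convex quadratic program (because $\tau=-\lambda_{min}(G)$ makes $H+I$ positive semidefinite), the Karush–Kuhn–Tucker conditions are necessary and sufficient for optimality. So I would plug $x_S$ into the KKT system for $(P(G))$ and check it satisfies them. Writing the Lagrange multipliers for the nonnegativity constraint as $y$, optimality is equivalent to the existence of $y\ge 0$ with
\begin{equation*}
(H+I_n)x_S = \hat{e} + \tfrac{1}{2}y, \qquad y_i\,[x_S]_i = 0 \;\; \forall i\in V(G).
\end{equation*}
For $i\in S$, we have $[x_S]_i=1$, and since $S$ is stable, $[(H+I_n)x_S]_i = \sum_{j\in S}H_{ij} + 1 = 1$, so $y_i=0$, consistent with complementary slackness. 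For $i\notin S$, we need $[(H+I_n)x_S]_i\ge 1$; but this entry equals $|N_G(i)\cap S|/(-\lambda_{min}(G))$, and the hypothesis \eqref{Luz_Condition} gives exactly $|N_G(i)\cap S|\ge -\lambda_{min}(G)$, so we may set $y_i=2([(H+I_n)x_S]_i-1)\ge 0$ and the KKT system is satisfied.

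Once KKT is verified, convexity gives $\upsilon(G)=2\hat{e}^Tx_S - x_S^T(H+I_n)x_S = 2|S|-|S| = |S|$. Combining this with the basic inequalities $|S|\le \alpha(G)\le \upsilon(G)$ from Theorem~\ref{cardoso_luz_th}\eqref{CL1}, we obtain $\alpha(G)=\upsilon(G)=|S|$, which simultaneously shows $S$ is maximum and delivers the desired conclusion.

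I do not anticipate any serious obstacle: the proof is essentially the same computation that underlies Luz's original characterization, except that here one does not need to assume a priori that $S$ is maximum, because the KKT verification itself forces $|S|=\upsilon(G)\ge \alpha(G)$. The only subtlety worth flagging is that the argument uses both directions of KKT for convex quadratic programs (sufficiency for the feasibility-to-optimality step, and the use of Theorem~\ref{cardoso_luz_th}\eqref{CL1} to close the sandwich $|S|\le\alpha(G)\le\upsilon(G)$); both are standard and already available in the excerpt.
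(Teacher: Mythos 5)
Your proof is correct. The survey itself states this theorem without proof (it is quoted from \cite{CardosoCvetkovic06}), but your argument is exactly the one the paper's own machinery points to: verifying that the characteristic vector of $S$ satisfies the KKT system for $(P(G))$ --- equivalently, the optimality condition \eqref{discrete optimality_condition} with $\tau=-\lambda_{min}(G)$, which is sufficient by convexity --- and then closing the sandwich $|S|\le\alpha(G)\le\upsilon(G)=|S|$ via Theorem~\ref{cardoso_luz_th}\eqref{CL1}; so this is essentially the same approach, carried out correctly.
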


From Theorem~\ref{cardoso_cvetkovic_th}, we may conclude that if a stable set $S$ with the property \eqref{Luz_Condition} is found,
then $S$ is a maximum stable set.\\

A graph $G$ such that $\alpha(G)=\upsilon(G)$ was called in \cite{Cardoso01} graph with convex quadratic stability number or simply
graph with convex-$QP$ stability number (where $QP$ stands for quadratic programming). The class of these graphs with convex-$QP$
stability number is denoted by $\mathcal{Q}$ and a graph $G \in \mathcal{Q}$ is called a $\mathcal{Q}$-graph.\\

A class of graphs is hereditary when it is closed under vertex deletion. Thus, if a graph $G$ belongs to some hereditary class,
then $G-\{v\}$ also belongs to the same class for any vertex $v \in V(G)$. The class of graphs $\mathcal{Q}$ is not hereditary
\cite{CardosoLozin12}. However, according to \cite[Th. 3]{Cardoso01}, this class is closed under deletion of $\alpha(G)$-redundant
vertices, that is, vertices $v \in V(G)$ such that $\alpha(G) = \alpha(G-\{v\})$. More generally, considering a $\alpha(G)$-redundant
vertex subset $U$ (that is, $\alpha(G) = \alpha(G-U)$), we have the following result.

\begin{theorem}\cite{Cardoso01}
Let $G$ be a graph and $U \subseteq V(G)$ an $\alpha(G)$-redundant vertex subset. If $G \in \mathcal{Q}$, then $G-U \in \mathcal{Q}$.
\end{theorem}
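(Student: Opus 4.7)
The plan is to leverage the parametric family $(P_G(\tau))$ from Theorem~\ref{cardoso_luz_th} rather than working directly with the convex program $(P(G))$, because part~4 of that theorem gives monotonicity in the subgraph relation at a \emph{fixed} $\tau$, whereas the convex bound $\upsilon(G-U)$ is evaluated at the parameter $-\lambda_{min}(G-U)$, which in general differs from $-\lambda_{min}(G)$. The bridge between the two parameters will be provided by part~3 of Theorem~\ref{cardoso_luz_th}, which propagates the equality $\upsilon_H(\tau^*) = \alpha(H)$ down to every smaller positive $\tau$.

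In more detail, first I would set $\tau^* := -\lambda_{min}(G)$ so that, by definition, $\upsilon_G(\tau^*) = \upsilon(G)$, and by hypothesis this equals $\alpha(G)$. Using part~4 of Theorem~\ref{cardoso_luz_th} at $\tau = \tau^*$ I get
\begin{equation*}
\upsilon_{G-U}(\tau^*) \le \upsilon_G(\tau^*) = \alpha(G) = \alpha(G-U),
\end{equation*}
where the last equality uses the $\alpha(G)$-redundancy of $U$. Combined with part~1 of the same theorem, which gives $\alpha(G-U) \le \upsilon_{G-U}(\tau^*)$, this forces $\upsilon_{G-U}(\tau^*) = \alpha(G-U)$.

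Now I would invoke the implication $(b) \Rightarrow (c)$ in part~3 of Theorem~\ref{cardoso_luz_th}, applied to the graph $G-U$ at the parameter $\tau^*$: this yields $\upsilon_{G-U}(\tau) = \alpha(G-U)$ for every $\tau \in \,]0,\tau^*]$. To finish, I need the inequality $-\lambda_{min}(G-U) \le -\lambda_{min}(G) = \tau^*$. This is exactly Cauchy interlacing applied to the principal submatrix $A_{G-U}$ of $A_G$, which gives $\lambda_{min}(G) \le \lambda_{min}(G-U)$. Choosing $\tau = -\lambda_{min}(G-U)$ (which lies in $\,]0,\tau^*]$ provided $G-U$ has at least one edge) then yields $\upsilon(G-U) = \upsilon_{G-U}(-\lambda_{min}(G-U)) = \alpha(G-U)$, i.e., $G-U \in \mathcal{Q}$.

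The only delicate point, and the place I would be most careful, is the boundary case where $G-U$ has no edges: then $\lambda_{min}(G-U) = 0$ and the convex program $(P(G-U))$ is not defined as written, so this case has to be handled by the convention that an edgeless graph trivially belongs to $\mathcal{Q}$ (indeed $\alpha(G-U) = |V(G)\setminus U|$ and setting $x = \hat{e}$ in the limiting formulation attains it). Apart from that, the argument is almost purely a chase through Theorem~\ref{cardoso_luz_th}, the main conceptual content being the interlacing step that guarantees $-\lambda_{min}(G-U)$ lies in the admissible range on which we already know $\upsilon_{G-U}$ equals $\alpha(G-U)$.
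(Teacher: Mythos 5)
Your argument is correct: the chain $\upsilon_{G-U}(\tau^*)\le\upsilon_G(\tau^*)=\alpha(G)=\alpha(G-U)\le\upsilon_{G-U}(\tau^*)$ with $\tau^*=-\lambda_{min}(G)$, followed by the implication $(b)\Rightarrow(c)$ of Theorem~\ref{cardoso_luz_th}(3) applied to $G-U$ and the interlacing fact $\lambda_{min}(G)\le\lambda_{min}(G-U)$ (which the survey itself records after Theorem~\ref{algorithmic_results}), does establish $\upsilon(G-U)=\alpha(G-U)$, and your treatment of the edgeless boundary case is a reasonable convention consistent with the survey's remark that graphs whose components are cliques are $\mathcal{Q}$-graphs. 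Note, however, that the survey states this theorem as a citation to the original paper and gives no proof, so there is nothing to match step by step; the route most in the spirit of the source would go through Luz's characterization \eqref{Luz_Condition} (or Theorem~\ref{cardoso_cvetkovic_th}): a maximum stable set $S$ of $G-U$ has $|S|=\alpha(G-U)=\alpha(G)$, hence is a maximum stable set of $G$ avoiding $U$, so $G\in\mathcal{Q}$ gives $|N_G(i)\cap S|\ge-\lambda_{min}(G)\ge-\lambda_{min}(G-U)$ for every $i\notin S$, and since $N_{G-U}(i)\cap S=N_G(i)\cap S$ for $i\in V(G-U)\setminus S$, the sufficiency direction yields $G-U\in\mathcal{Q}$ at once. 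Your parametric-family argument buys independence from the combinatorial condition \eqref{Luz_Condition}, using only the monotonicity and rigidity properties of $\upsilon_G(\tau)$ proved in the survey, at the cost of being slightly longer; the characterization-based proof is shorter and makes transparent why redundant deletion preserves membership in $\mathcal{Q}$ (the witnessing stable set and the neighborhood counts are untouched). Both are sound.
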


The class of graphs $\mathcal{Q}$ is infinite. For instance all the line graphs of connected graphs with a perfect matching are in $\mathcal{Q}$
as it is stated by the following theorem.
F
\begin{theorem}\cite{Cardoso01}
A connected graph $G$ with at least one edge, which is nor a star neither a triangle, has a perfect matching if and only if its line graph
$L(G)$ is a $\mathcal{Q}$-graph.
\end{theorem}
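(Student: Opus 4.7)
The plan is to translate the matching condition on $G$ into the spectral condition \eqref{Luz_Condition} for $L(G)$ supplied by Theorem~\ref{cardoso_cvetkovic_th}. Recall that vertices of $L(G)$ are edges of $G$, stable sets of $L(G)$ correspond bijectively to matchings of $G$, and for an edge $e=ij$ regarded as a vertex of $L(G)$, its neighborhood $N_{L(G)}(e)$ consists of all edges of $G$ sharing an endpoint with $e$. Consequently, for a matching $M\subseteq E(G)$ and an edge $e=ij\notin M$, the quantity $|N_{L(G)}(e)\cap M|$ equals the number of endpoints of $e$ that are saturated by $M$, hence it is $0$, $1$ or $2$.

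For the forward implication, let $M$ be a perfect matching of $G$ and view $M$ as a stable set in $L(G)$. Every endpoint of every edge is saturated by $M$, so for any $e\notin M$ we obtain $|N_{L(G)}(e)\cap M|=2$. Since $L(G)$ is a line graph, it is classically known that $\lambda_{min}(L(G))\ge -2$, so $-\lambda_{min}(L(G))\le 2=\min\{|N_{L(G)}(e)\cap M|:e\notin M\}$. Theorem~\ref{cardoso_cvetkovic_th} applied to the stable set $M$ of $L(G)$ then gives $\alpha(L(G))=\upsilon(L(G))$, i.e. $L(G)\in\mathcal{Q}$.

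For the converse, suppose $L(G)\in\mathcal{Q}$ and let $S$ be a maximum stable set of $L(G)$ satisfying \eqref{Luz_Condition}; let $M\subseteq E(G)$ be the matching it corresponds to. Assume for contradiction that $M$ is not perfect. Then some vertex $v\in V(G)$ is unsaturated by $M$; since $G$ is connected and $G\neq K_2$ (as $K_2=K_{1,1}$ is a star, which is excluded), $v$ has a neighbor $w$. Setting $e=vw$, we have $e\notin M$, no edge of $M$ is incident to $v$, and at most one edge of $M$ is incident to $w$, so $|N_{L(G)}(e)\cap M|\le 1$. Condition \eqref{Luz_Condition} then forces $-\lambda_{min}(L(G))\le 1$, i.e. $\lambda_{min}(L(G))\ge -1$. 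Because $G$ is connected with at least two edges (again since $G\neq K_2$) and connected, $L(G)$ is connected and has at least one edge, so by the fact recalled in the introduction the only way to have $\lambda_{min}(L(G))=-1$ is that $L(G)$ itself be complete, which in turn happens only when $G$ is a star $K_{1,n}$ or the triangle $K_3$. Both are excluded by hypothesis, giving the desired contradiction and forcing $M$ to be perfect.

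The main obstacle is the careful handling of the degenerate boundary cases, which is exactly why the statement excludes stars and $K_3$: those are precisely the connected graphs whose line graphs are cliques, and cliques have $\lambda_{min}=-1$, so the converse argument would collapse. The substantive mathematical inputs are Theorem~\ref{cardoso_cvetkovic_th} and the classical spectral bound $\lambda_{min}(L(G))\ge -2$ for line graphs; everything else reduces to elementary bookkeeping about which endpoints of a non-matching edge are saturated.
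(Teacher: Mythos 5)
Your proof is correct: the translation between matchings of $G$ and stable sets of $L(G)$, the use of the Luz condition \eqref{Luz_Condition} (via Theorem~\ref{cardoso_cvetkovic_th}) together with the bound $\lambda_{min}(L(G))\ge -2$ for line graphs, and the boundary analysis identifying stars and the triangle as the connected graphs with complete line graphs, all hold up. The survey itself states this theorem only with a citation to \cite{Cardoso01} and gives no proof, but your argument is exactly the natural one these tools suggest and is in the same spirit as the original.
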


According to Las Vergnas \cite{LasVergnas75}, every connected claw-free graph of even order has a perfect matching. Since the line graphs
are claw-free, every line graph of a connected graph with even size has a perfect matching. As immediate consequence, we have the
following corollary.

\begin{corollary}\cite{Cardoso01}\\
If $G$ is a connected graph with an even size, then $L(L(G))$ is a $\mathcal{Q}$-graph.
\end{corollary}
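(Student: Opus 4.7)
The proof strategy is to invoke the preceding theorem with $G$ replaced by $L(G)$, so the task reduces to verifying that $L(G)$ satisfies all the hypotheses of that theorem: namely, that $L(G)$ is connected with at least one edge, is neither a star nor a triangle, and admits a perfect matching. Once these are confirmed, the preceding theorem directly yields $L(L(G)) \in \mathcal{Q}$.

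First I would observe that $|V(L(G))| = |E(G)|$, which is even by hypothesis, so $L(G)$ has even order. Since $G$ is connected with at least one edge, $L(G)$ is also connected. As $L(G)$ is a line graph, it is claw-free. Invoking the Las Vergnas theorem quoted just before the corollary, every connected claw-free graph of even order admits a perfect matching; hence $L(G)$ has a perfect matching. This is the key structural input and the main engine of the argument.

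The structural side conditions can be handled by a small case check: $L(G) = K_3$ forces $G \in \{K_3, K_{1,3}\}$, each of size $3$ (odd), so this case is excluded; similarly, $L(G) = K_{1,k}$ with $k \ge 2$ forces $G = P_4$ of size $3$ (odd), and the remaining degenerate possibility $L(G) = K_{1,1}$ would require $G = P_3$ of size $2$, which must be treated separately (and for which $L(L(G)) = K_1$ is a trivial case). In all non-degenerate situations, the hypotheses of the preceding theorem applied to $L(G)$ are met, and one concludes $L(L(G)) \in \mathcal{Q}$.

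The main obstacle I anticipate is a careful accounting of these small-graph edge cases, in particular ensuring that the ``not a star'' and ``not a triangle'' hypotheses are never vacuously violated by some small connected $G$ of even size. Once this bookkeeping is dispatched, the corollary follows as an immediate combination of the preceding theorem with the Las Vergnas theorem applied to the claw-free graph $L(G)$.
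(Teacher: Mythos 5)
Your proposal is correct and follows essentially the same route as the paper: apply the Las Vergnas theorem to the connected, claw-free graph $L(G)$ of even order to obtain a perfect matching, and then invoke the preceding theorem with $G$ replaced by $L(G)$. The small-graph case analysis excluding $L(G)$ being a star or a triangle is extra bookkeeping that the paper leaves implicit (it states the corollary as an immediate consequence), but it does not change the argument.
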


There are several famous $\mathcal{Q}$-graphs as it is the case of the Petersen graph $P$, depicted in Figure~\ref{petersengraph},
for which $\alpha(P) = \upsilon(P) = 4$.

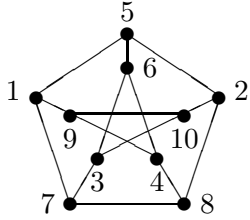
\begin{figure}[h]
\begin{center}
\unitlength=0.3 mm
\begin{picture}(400,100)(50,90)
%
\put(235,110){\line(2,1){55}}  
\put(265,110){\line(-2,1){55}} 
\put(250,150){\line(-1,-3){14}}
\put(250,150){\line(1,-3){14}} 
\put(225,90){\line(1,0){50}}   
\put(225,90){\line(-1,3){16}}  
\put(225,90){\line(3,5){12}}   
\put(275,90){\line(1,3){16}}   
\put(275,90){\line(-3,5){12}}  
\put(225,130){\line(1,0){50}}  
\put(250,165){\line(-3,-2){40}}
\put(250,165){\line(0,-1){15}} 
\put(250,165){\line(3,-2){40}} 
%
\put(237,110){\circle*{5.7}} 
\put(263,110){\circle*{5.7}} 
\put(225,90){\circle*{5.7}} 
\put(275,90){\circle*{5.7}} 
\put(250,165){\circle*{5.7}}
\put(250,150){\circle*{5.7}}
\put(210,137){\circle*{5.7}}
\put(290,137){\circle*{5.7}}
\put(225,129){\circle*{5.7}}
\put(275,129){\circle*{5.7}}
%
%
\put(237,100){\makebox(0,0){3}}
\put(263,100){\makebox(0,0){4}}
\put(225,119){\makebox(0,0){9}}
\put(275,119){\makebox(0,0){10}}
\put(260,150){\makebox(0,0){6}}
\put(215,90){\makebox(0,0){7}}
\put(285,90){\makebox(0,0){8}}
\put(250,175){\makebox(0,0){5}}
\put(200,140){\makebox(0,0){1}}
\put(300,140){\makebox(0,0){2}}
\end{picture}
\end{center}
\caption{The Petersen graph.}\label{petersengraph}
\end{figure}

According to \cite{BarbosaCardoso04}, a graph $G$ is $\tau$-regular-stable if there exists a maximum stable set $S$ such that
\begin{equation}
\forall v \in V(G) \setminus S, \qquad |N_{G}(v) \cap S| = \tau. \label{tau_regular_stable}
\end{equation}
For instance, the graph depicted in Figure~\ref{figura2} and the Petersen graph depicted in Figure~\ref{petersengraph}, are
$2$-regular-stable. The $\tau$-regular-stable graphs are particular cases of $\mathcal{Q}$-graphs, when $\tau = -\lambda_{min}(G)$
(as it is the case of the above referred graphs). According to \cite{BarbosaCardoso04}, if a graph $G$ is $\tau$-regular-stable, then
$$
\frac{n\tau}{\Delta(G)+\tau} \leq \alpha(G) \leq \frac{n\tau}{\delta(G)+\tau},
$$
where $\Delta(G)$ and $\delta(G)$ denote, respectively, the maximum and minimum degree of the vertices of $G$. Therefore, if $G$ is
$p$-regular and $\tau$-regular-stable then $\alpha(G) = \frac{n\tau}{p+\tau}.$ \\

It is immediate that a graph is in $\mathcal{Q}$ if and only if each of its components belongs to $\mathcal{Q}$. On the other hand,
every graph $G$ has a subgraph $H$ belonging to $\mathcal{Q}$ and such that $\alpha(G)=\alpha(H)$. In the worst case, deleting as many
vertices not belonging to a maximum stable set as necessary, a graph with the same stability number and in which every component is a
clique is obtained and this graph is a $\mathcal{Q}$-graph.\\

Now it is worth mention the concept of $(\kappa,\tau)$-regular set, introduced in \cite{CardosoRama04}, which is a vertex subset $S$
of a graph $G$, inducing a $\kappa$-regular subgraph such that every vertex out of $S$ has $\tau$ neighbors in $S$, that is, for any
vertex $v \in V(G)$ we have
$$
|N_G(v) \cap S| = \left\{\begin{array}{ll}
                                \kappa, & \hbox{if } v \in S;\\
                                \tau,   & \hbox{otherwise.}
                         \end{array}\right.
$$
For convenience, when $G$ is a $p$-regular graph, the whole vertex set $V(G)$ is considered a $(p,0)$-regular set. For instance,
considering the Petersen graph depicted in Figure~\ref{petersengraph}, the following $(\kappa,\tau)$-regular sets are obtained.
\begin{itemize}
\item{The set $S_1=\{1,2,3,4\}$ is $(0,2)$-regular.}
\item{The set $S_2=\{5,6,7,8,9,10\}$ is $(1,3)$-regular.}
\item{The set $S_3=\{1, 2, 5, 7, 8 \}$ is $(2,1)$-regular.}
\end{itemize}
Notice that a $\tau$-regular-stable graph defined by the condition \eqref{tau_regular_stable} is a graph with a $(0,\tau)$-regular set.
A nice property of a $p$-regular graph $G$ with a $(\kappa,\tau)$-regular set is that $\lambda = \kappa-\tau$ is an eigenvalue of $G$.
By definition, the whole vertex set of $G$ is $(p,0)$-regular and then $p-0=p$ is an eigenvalue of $G$ (in fact it is the largest eigenvalue
of $G$). \\

Using the concept of $(\kappa,\tau)$-regular set, we may introduce the following necessary and sufficient condition for a regular graph
be a $\mathcal{Q}$-graph.

\begin{theorem}\cite{CardosoCvetkovic06}
Let $G$ be a regular graph with at least one edge. Then $G$ is a $\mathcal{Q}$-graph if and only if there exists a $(0,\tau)$-regular
set $S \subset V(G)$, with $\tau=-\lambda_{min}(G)$. Furthermore, $S$ is a maximum stable set and every maximum stable set is $(0,\tau)$-regular.
\end{theorem}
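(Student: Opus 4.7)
The plan is to prove the equivalence by handling the two directions separately, using Theorem~\ref{cardoso_cvetkovic_th} (the criterion \eqref{Luz_Condition}) together with a degree-counting argument that exploits regularity. The closing assertions (that the $(0,\tau)$-regular set is a maximum stable set, and that every maximum stable set is $(0,\tau)$-regular) will fall out almost for free: the first from Theorem~\ref{cardoso_cvetkovic_th}, and the second from the equality case of the counting inequality used in the forward direction.

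For the forward direction, I would assume $G$ is $p$-regular with $G \in \mathcal{Q}$, and set $\tau = -\lambda_{min}(G)$. Let $S$ be any maximum stable set of $G$. From the Hoffman bound \eqref{Hoffman_bound} applied to regular graphs (recalled in this section) one has $\upsilon(G) = \frac{n\tau}{p+\tau}$, so $|S| = \alpha(G) = \upsilon(G) = \frac{n\tau}{p+\tau}$. On the other hand, counting the edges between $S$ and $V(G)\setminus S$ in two ways, using that $S$ is stable and $G$ is $p$-regular, gives
\begin{equation*}
p\,|S| \;=\; \sum_{i \notin S} |N_G(i) \cap S| \;\ge\; \tau\,(n-|S|),
\end{equation*}
where the inequality uses Luz's condition \eqref{Luz_Condition} from Theorem~\ref{cardoso_cvetkovic_th}, valid since $\alpha(G) = \upsilon(G)$. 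Rearranging yields $|S| \ge \frac{n\tau}{p+\tau}$, which is exactly the value of $|S|$ computed above. Hence equality must hold in the middle step, forcing $|N_G(i) \cap S| = \tau$ for every $i \notin S$. Together with the fact that $S$ is stable (so the induced subgraph is $0$-regular), this proves $S$ is $(0,\tau)$-regular, and since $S$ was an arbitrary maximum stable set, every maximum stable set is $(0,\tau)$-regular.

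For the backward direction, suppose a $(0,\tau)$-regular set $S$ exists with $\tau = -\lambda_{min}(G)$. Then $S$ is a stable set and $|N_G(i)\cap S| = \tau \ge \tau$ for every $i \notin S$, so condition \eqref{Luz_Condition} is satisfied by $S$. Theorem~\ref{cardoso_cvetkovic_th} then yields $\alpha(G) = \upsilon(G)$, that is, $G \in \mathcal{Q}$, and as already observed in the paragraph following that theorem, any stable set satisfying \eqref{Luz_Condition} is necessarily a maximum stable set; hence $S$ is maximum.

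The only mildly delicate step is the forward direction: one needs to realize that \eqref{Luz_Condition} provides one inequality, the edge-counting provides a second, and the explicit value of $\upsilon(G)$ for regular graphs closes the loop and forces equality. Everything else is a direct application of results already proved in the paper, so I expect no further obstacles.
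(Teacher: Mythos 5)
Your argument is correct, and note that the survey itself states this theorem without proof (it is quoted from the cited reference), so there is no in-paper proof to compare against; your route --- combining the closed form $\upsilon(G)=\tfrac{n\tau}{p+\tau}$ for $p$-regular graphs with a two-way count of the edges between $S$ and $V(G)\setminus S$ and the equality case of the resulting inequality --- is the natural one and works. One attribution slip to fix: in the forward direction you apply condition \eqref{Luz_Condition} to an \emph{arbitrary} maximum stable set $S$, but Theorem~\ref{cardoso_cvetkovic_th} only asserts, when $\alpha(G)=\upsilon(G)$, the \emph{existence} of some stable set satisfying \eqref{Luz_Condition}; what you actually need is the original theorem of Luz stated just before it in the paper, whose ``for a maximum stable set $S$ (and then for all)'' clause guarantees \eqref{Luz_Condition} for every maximum stable set. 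With that citation corrected, the chain $p|S|=\sum_{i\notin S}|N_G(i)\cap S|\ge\tau\,(n-|S|)$ together with $|S|=\alpha(G)=\upsilon(G)=\tfrac{n\tau}{p+\tau}$ does force $|N_G(i)\cap S|=\tau$ for all $i\notin S$, giving both the existence claim and that every maximum stable set is $(0,\tau)$-regular; your backward direction, via Theorem~\ref{cardoso_cvetkovic_th} and the remark following it (which does not even use regularity), is fine as written.
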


Regarding the existence of $(\kappa,\tau)$-regular sets in regular graphs, it follows a necessary and sufficient condition deduced
in \cite{Thompson81} (using a different terminology).

\begin{theorem} \cite{Thompson81}
A $p$-regular graph $G$ has a $(\kappa,\tau)$-regular set $S \subset V(G)$ if and only if $\kappa-\tau$ is an eigenvalue and
$\hat{u} = \mathbf{x} -\frac{\tau}{p+\tau-\kappa}\hat{e}$, where $\mathbf{x}$ is the characteristic vector of $S$, is an eigenvector
associated to $\lambda=\kappa-\tau$.
\end{theorem}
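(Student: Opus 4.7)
The plan is to translate the combinatorial condition $|N_G(v)\cap S|=\kappa$ for $v\in S$ and $|N_G(v)\cap S|=\tau$ for $v\notin S$ into a single matrix identity, and then use $p$-regularity (which gives $A_G\hat{e}=p\hat{e}$) to convert that identity into an eigenvalue equation.

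First I would observe that for any vertex $v$, the entry $(A_G\mathbf{x})_v$ equals $|N_G(v)\cap S|$. Hence the $(\kappa,\tau)$-regularity of $S$ is equivalent to the single vector identity
\begin{equation*}
A_G \mathbf{x} \;=\; \kappa\,\mathbf{x} + \tau(\hat{e}-\mathbf{x}) \;=\; (\kappa-\tau)\,\mathbf{x} + \tau\,\hat{e}.
\end{equation*}
This reformulation already carries essentially all the content; the rest is linear algebra.

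For the forward direction, I would set $\hat{u} = \mathbf{x}-c\hat{e}$ with $c$ to be determined. Using the identity above together with $A_G\hat{e}=p\hat{e}$, I compute
\begin{equation*}
A_G \hat{u} \;=\; (\kappa-\tau)\mathbf{x} + \tau\hat{e} - cp\,\hat{e} \;=\; (\kappa-\tau)\hat{u} + \bigl[\tau - cp + c(\kappa-\tau)\bigr]\hat{e}.
\end{equation*}
Choosing $c=\tau/(p+\tau-\kappa)$ kills the bracketed coefficient, so $A_G\hat{u}=(\kappa-\tau)\hat{u}$, which exhibits $\kappa-\tau$ as an eigenvalue with $\hat{u}$ as a corresponding eigenvector (nonzero whenever $S$ is a proper nonempty subset, since $\mathbf{x}$ is then not a multiple of $\hat{e}$). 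For the converse, I would simply reverse the computation: assuming $\hat{u}=\mathbf{x}-\frac{\tau}{p+\tau-\kappa}\hat{e}$ satisfies $A_G\hat{u}=(\kappa-\tau)\hat{u}$, expanding and using $A_G\hat{e}=p\hat{e}$ yields $A_G\mathbf{x}=(\kappa-\tau)\mathbf{x}+\tau\hat{e}$, and then reading this off coordinate by coordinate recovers the defining conditions of a $(\kappa,\tau)$-regular set.

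The only real subtlety is the denominator $p+\tau-\kappa$: since $S$ induces a $\kappa$-regular subgraph inside a $p$-regular graph one has $\kappa\le p$, and $\tau\ge 0$, so $p+\tau-\kappa\ge 0$, with equality only in the degenerate case $\kappa=p$, $\tau=0$ (where $S$ is a union of connected components, $\mathbf{x}$ itself is already an eigenvector for $p$, and the shift is unnecessary). This boundary case is covered by the paper's convention that $V(G)$ is a $(p,0)$-regular set and is the only point requiring a brief aside; the rest of the argument is a direct two-line calculation in each direction.
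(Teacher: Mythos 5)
Your argument is correct: encoding $(\kappa,\tau)$-regularity of $S$ as the single identity $A_G\mathbf{x}=(\kappa-\tau)\mathbf{x}+\tau\hat{e}$ and then shifting by the appropriate multiple of $\hat{e}$ (using $A_G\hat{e}=p\hat{e}$) is exactly the standard way to prove this equivalence, and both directions of your calculation check out, including the observation that $\hat{u}\neq 0$ for a proper nonempty $S$. Note that the paper itself states this result without proof, citing Thompson, so there is no internal argument to compare against; your aside on the degenerate case $\kappa=p$, $\tau=0$, where the denominator $p+\tau-\kappa$ vanishes and $\mathbf{x}$ is already an eigenvector for $p$, is a welcome clarification of a point the cited statement glosses over.
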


\section{Recognition of convex-$QP$ graphs}\label{Recognition}

This section starts with a theorem that summarizes some results obtained in \cite{Cardoso01} with the purpose of designing an algorithm
for the recognition of $\mathcal{Q}$-graphs.

\begin{theorem}\label{algorithmic_results} \cite{Cardoso01}
Let $G$ be a graph with at least one edge.
\begin{enumerate}
\item Assuming that $\lambda_{min}(G) < \lambda_{min}(G-U),$ with $U \subset V(G)$,
      \begin{eqnarray*}
      \upsilon(G) = \upsilon(G-U) & \Rightarrow & G \in {\cal Q};\\
      \upsilon(G) > \upsilon(G-U) & \Rightarrow & G \not \in {\cal Q} \mbox{ or } \alpha(G-U)<\alpha(G).
      \end{eqnarray*}
\item If $\exists v \in V(G)$ such that $\upsilon(G) \not = \max\{\upsilon(G-\{ v \}), \upsilon(G-N_{G}(v))\}$, then $G \not \in {\cal Q}.$
\item Consider that $\exists v \in V(G)$ such that $\upsilon(G-\{ v \}) \not= \upsilon(G-N_{G}(v))$.
      \begin{enumerate}
      \item{If $\upsilon(G) = \upsilon(G-\{ v \})$ then $G \in {\cal Q} \mbox{ iff } G-\{v\} \in {\cal Q}.$}
      \item{If $\upsilon(G) = \upsilon(G-N_{G}(v))$ then $G \in {\cal Q} \mbox{ iff } G-N_{G}(v) \in {\cal Q}.$}
      \end{enumerate}
\end{enumerate}
\end{theorem}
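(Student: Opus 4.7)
The whole theorem rests on two already established facts: the $\tau$-monotonicity of $\upsilon_G(\tau)$ together with the strict gain from part~(3) of Theorem~\ref{cardoso_luz_th} (equality across a non-degenerate interval forces $\upsilon_G(\tau^\ast)=\alpha(G)$), and the vertex-deletion inequality $\upsilon_{G-U}(\tau)\leq\upsilon_G(\tau)$ from part~(4). My plan is to use these two tools to compare $\upsilon(G)=\upsilon_G(-\lambda_{min}(G))$ with $\upsilon(G-U)=\upsilon_{G-U}(-\lambda_{min}(G-U))$ via the intermediate quantity $\upsilon_{G-U}(-\lambda_{min}(G))$. By interlacing, $-\lambda_{min}(G-U)\leq -\lambda_{min}(G)$, so monotonicity followed by part~(4) always gives $\upsilon(G-U)\leq \upsilon_{G-U}(-\lambda_{min}(G))\leq \upsilon(G)$.

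For item~(1), assume $\lambda_{min}(G)<\lambda_{min}(G-U)$, so the inequality $-\lambda_{min}(G-U)<-\lambda_{min}(G)$ is strict. If $\upsilon(G)=\upsilon(G-U)$, every inequality in the two-step chain above becomes equality; in particular $\upsilon_{G-U}(-\lambda_{min}(G-U))=\upsilon_{G-U}(-\lambda_{min}(G))$ with the two arguments distinct, which by Theorem~\ref{cardoso_luz_th}(3) forces $\upsilon_{G-U}(-\lambda_{min}(G))=\alpha(G-U)$. Combining with $\alpha(G-U)\leq\alpha(G)\leq\upsilon(G)$ yields $\alpha(G)=\upsilon(G)$, i.e.\ $G\in\mathcal{Q}$. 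For the second implication I would argue by contrapositive: if $G\in\mathcal{Q}$ and $\alpha(G-U)=\alpha(G)$, then $\alpha(G)=\alpha(G-U)\leq\upsilon(G-U)\leq\upsilon(G)=\alpha(G)$, so $\upsilon(G)=\upsilon(G-U)$, contradicting the strict inequality hypothesis.

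For item~(2), the structural observation is that any maximum stable set $S$ of $G$ either omits $v$ (so $S$ is a stable set of $G-\{v\}$) or contains $v$ (so $S\setminus\{v\}$ avoids $N_G(v)$ and $S$ is a stable set of $G-N_G(v)$), which gives the combinatorial identity $\alpha(G)=\max\{\alpha(G-\{v\}),\alpha(G-N_G(v))\}$. Using the general inequality $\upsilon(G-W)\leq\upsilon(G)$ established in the first paragraph (applied with $W=\{v\}$ and $W=N_G(v)$), together with $\alpha\leq\upsilon$ on each subgraph, we get $\alpha(G)\leq \max\{\upsilon(G-\{v\}),\upsilon(G-N_G(v))\}\leq\upsilon(G)$. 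If $G\in\mathcal{Q}$ the extremes coincide, forcing equality in the middle; contrapositively, a strict inequality implies $G\notin\mathcal{Q}$. Item~(3) then falls out by combining (2) with the dichotomy: under the hypothesis $\upsilon(G-\{v\})\neq\upsilon(G-N_G(v))$ and $\upsilon(G)=\upsilon(G-\{v\})$, one concludes $\upsilon(G-N_G(v))<\upsilon(G)$, so $\alpha(G-N_G(v))<\alpha(G)$, hence the ``max'' identity forces $\alpha(G-\{v\})=\alpha(G)$; then $G\in\mathcal{Q}\iff \alpha(G-\{v\})=\upsilon(G-\{v\})\iff G-\{v\}\in\mathcal{Q}$, and (b) is symmetric. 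The only delicate step I foresee is being scrupulous about strict versus non-strict inequalities in part~(1), where the hypothesis $\lambda_{min}(G)<\lambda_{min}(G-U)$ is precisely what activates Theorem~\ref{cardoso_luz_th}(3); the rest is essentially bookkeeping with the chain $\alpha\leq\upsilon_{\cdot}(\tau_1)\leq\upsilon_{\cdot}(\tau_2)$.
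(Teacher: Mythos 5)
Your reduction of everything to the chain $\upsilon(G-U)=\upsilon_{G-U}(-\lambda_{min}(G-U))\le\upsilon_{G-U}(-\lambda_{min}(G))\le\upsilon_{G}(-\lambda_{min}(G))=\upsilon(G)$ (interlacing plus parts (2) and (4) of Theorem~\ref{cardoso_luz_th}) is sound, and your arguments for items (1) and (2) are correct; note that the survey states this theorem without proof, citing \cite{Cardoso01}, so there is no in-paper proof to compare against, only the remark following the theorem which records exactly this monotone chain. One small point in item (1): you should dispose separately of the degenerate case in which $G-U$ has no edges, since then $-\lambda_{min}(G-U)=0$ lies outside the interval $]0,\tau^{*}[$ required by Theorem~\ref{cardoso_luz_th}(3); there the conclusion is immediate because $\upsilon(G-U)=\alpha(G-U)$. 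Otherwise your use of the strict hypothesis $\lambda_{min}(G)<\lambda_{min}(G-U)$ to activate part (3) is exactly right, as is the contrapositive argument for the second implication and the identity $\alpha(G)=\max\{\alpha(G-\{v\}),\alpha(G-N_G(v))\}$ in item (2).

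The gap is in item (3). The step ``$\upsilon(G-N_G(v))<\upsilon(G)$, so $\alpha(G-N_G(v))<\alpha(G)$'' is not a valid unconditional inference: a strict drop of $\upsilon$ does not force a strict drop of $\alpha$ unless $\alpha(G)=\upsilon(G)$. For example, for $G=C_5$ and any vertex $v$ one has $\upsilon(C_5-N_G(v))=\upsilon(K_2\cup K_1)=2<\sqrt{5}=\upsilon(C_5)$ while $\alpha(C_5-N_G(v))=2=\alpha(C_5)$ (this $G$ does not satisfy the hypotheses of (3), but it shows the inference needs the extra assumption $G\in\mathcal{Q}$, which is precisely one side of the equivalence you are proving). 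Consequently the equality $\alpha(G-\{v\})=\alpha(G)$, on which your closing chain of equivalences rests, is only justified in the direction where $G\in\mathcal{Q}$ is assumed, and cannot be invoked to prove $G-\{v\}\in\mathcal{Q}\Rightarrow G\in\mathcal{Q}$. The repair is easy: prove the two implications separately. If $G\in\mathcal{Q}$, then $\alpha(G)=\upsilon(G)$, so $\alpha(G-N_G(v))\le\upsilon(G-N_G(v))<\upsilon(G)=\alpha(G)$, the max identity gives $\alpha(G-\{v\})=\alpha(G)=\upsilon(G)=\upsilon(G-\{v\})$, hence $G-\{v\}\in\mathcal{Q}$. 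Conversely, if $G-\{v\}\in\mathcal{Q}$, then $\alpha(G)\ge\alpha(G-\{v\})=\upsilon(G-\{v\})=\upsilon(G)\ge\alpha(G)$, so $G\in\mathcal{Q}$; this direction needs neither the hypothesis $\upsilon(G-\{v\})\neq\upsilon(G-N_G(v))$ nor any comparison of $\alpha(G-\{v\})$ with $\alpha(G)$. Item (3)(b) is repaired symmetrically.
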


Notice that $\lambda_{min}(G)<0$ when $G$ has at least one edge and, after the deletion of vertices, the least eigenvalue does not decrease
and the optimal value of \eqref{Luz_UpperBound} does not increase, that is,
$$
\forall U \subset V(G), \;\; \left\{\begin{array}{rcl}
                                          \lambda_{min}(G) & \le & \lambda_{min}(G-U),\\
                                          \upsilon(G)      & \ge & \upsilon(G-U).
                                   \end{array}\right.
$$

Applying the results of Theorem~\ref{algorithmic_results}, an algorithm proposed in \cite[Alg. 1]{CardosoLuz16} recognizes in polynomial-time
if a given graph $G$ belongs or not to $\mathcal{Q}$ or, if none of these conclusions is possible, identifies a so called adverse subgraph of
$G$. Furthermore, according to the Algorithm 1 in \cite{CardosoLuz16}, if using some additional procedure we are able to conclude that the
obtained adverse subgraph is a $\mathcal{Q}$-graph, then the graph $G$ is a $\mathcal{Q}$-graph. The Algorithm 1 in \cite{CardosoLuz16} is
similar to the algorithm presented in \cite{Cardoso01} for the case of the recognition of line graphs belonging to $\mathcal{Q}$. An adverse
graph (a concept introduced in \cite{CardosoLuz01}) is a graph $H$ without isolated vertices with the following properties:
\begin{enumerate}
\item $\upsilon(H)$ and $\lambda_{\min}(H)$ are integers;
\item For any vertex $i \in V(H), \;\; \left\{\begin{array}{l}
                                             \upsilon(H-N_{H}(i))=\upsilon(H);\\
                                             \lambda_{\min}(H-N_{H}(i))=\lambda_{\min}(H).
                                             \end{array}\right.$
\end{enumerate}
Notice that if $H$ is an adverse graph, the equalities $\upsilon(H-i)=\upsilon(H)$ and $\lambda_{\min}(H-\{i\})=\lambda_{\min}(H)$ also hold
for each vertex $i\in V(H).$ Indeed, since $H$ has no isolated vertices, for any vertex $i\in V(H)$, there exists a vertex $j$ such that
$i\in N_{H}(j)$. Therefore, taking into account the above definition and the inequalities%
\[
\upsilon(H-N_{H}(j))\leq \upsilon(H-i)\leq \upsilon(H),
\]
the equality $\upsilon(H-i)=\upsilon(H)$ holds. By similar arguments, the equalities $\lambda_{\min}(H-i)=\lambda_{\min}(H)$ for every
vertex $i\in V(H)$ also hold.\\

The Petersen graph in Figure~\ref{petersengraph} is an example of an adverse graph.\\

In \cite{CardosoLuz16} the  following conjecture was posed.

\begin{conjecture}\cite{CardosoLuz16}\label{conjecture}
All adverse graphs are $\mathcal{Q}$-graphs.
\end{conjecture}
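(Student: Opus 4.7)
The goal is to show that any adverse graph $H$ satisfies $\alpha(H) = \upsilon(H)$. By Theorem~\ref{cardoso_cvetkovic_th}, this reduces to exhibiting a stable set $S \subseteq V(H)$ with $|N_H(i)\cap S| \ge \tau$ for every $i \notin S$, where $\tau = -\lambda_{\min}(H)$. The hypothesis gives us two strong levers: $\tau$ and $\upsilon(H)$ are integers, and both $\upsilon$ and $\lambda_{\min}$ are unchanged when any open neighborhood $N_H(v)$ is removed. These two facts — integrality and neighborhood-invariance — are the only tools I would use.

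My plan is to extract combinatorial structure from the optimal face of the convex program $(P(H))$ in \eqref{Luz_UpperBound}. For each $v \in V(H)$, let $y^{(v)}$ be an optimal solution of $(P(H-N_H(v)))$ and extend it by zeros on $N_H(v)$ to obtain $x^{(v)} \in \mathbb{R}^{V(H)}$. Because $\lambda_{\min}(H-N_H(v)) = \lambda_{\min}(H)$, the two programs share the same scaling $\tau$, so the objective values coincide and $x^{(v)}$ is optimal for $(P(H))$ with value $\upsilon(H-N_H(v)) = \upsilon(H)$. The isolated vertex $v$ in $H-N_H(v)$ forces $x^{(v)}_v = 1$ via the KKT characterization of Theorem~\ref{optimality_condition_1}, while $x^{(v)}_u = 0$ for $u \in N_H(v)$ by construction. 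Hence $(P(H))$ admits an entire family of optima, one ``centered'' at each vertex. From the KKT identities — $(A_H x)_i = \tau(1 - x_i)$ on the support and $(A_H x)_i \ge \tau$ off the support — I would then try to combine members of this family, or select an extreme point of the optimal face intersected with $[0,1]^{V(H)}$, to produce a $\{0,1\}$-valued optimum $x^*$. Its support $S$ is automatically stable (the first KKT identity then forces $(A_H x^*)_i = 0$ on $S$), and the second KKT identity becomes exactly the Luz condition~\eqref{Luz_Condition}.

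The main obstacle — and in my view the reason the conjecture has resisted proof — is producing that $\{0,1\}$-valued optimum. The optimal face of $(P(H))$ is an affine slice of the $\lambda_{\min}$-eigenspace of $A_H$ restricted to the nonnegative orthant; its extreme points are not manifestly integral, and integrality of $\tau$ and $\upsilon(H)$, while necessary, does not on its own force integrality. A natural fallback is induction on $|V(H)|$: set $H' = H - N_H(v) - I(v)$ where $I(v)$ is the set of isolated vertices of $H-N_H(v)$, so that $\upsilon(H') = \upsilon(H) - |I(v)|$ and $\lambda_{\min}(H') = \lambda_{\min}(H)$; if $H' \in \mathcal{Q}$, then a maximum stable set of $H'$ together with $I(v)$ gives a stable set of $H$ of size $\upsilon(H)$, and part~(3) of Theorem~\ref{algorithmic_results} can be invoked to transfer membership in $\mathcal{Q}$ up to $H$. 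The obstruction here is that $H'$ need not be adverse: for $w \in V(H')$ the neighborhood $N_{H'}(w) = N_H(w) \cap V(H')$ is generally a proper subset of $N_H(w)$, so $\upsilon(H-N_H(w)) = \upsilon(H)$ does not descend to $\upsilon(H'-N_{H'}(w)) = \upsilon(H')$. Identifying a reduction that stays inside the adverse class, or a direct convex-analytic argument forcing some convex combination of the $x^{(v)}$ to be $\{0,1\}$-valued, is therefore the decisive step to pin down.
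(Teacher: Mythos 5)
You have not proved the statement, and indeed you could not be expected to: the statement is Conjecture~\ref{conjecture}, which the paper explicitly leaves open (``the conjecture remains open''), so there is no proof in the paper to compare against. What you have written is an attack plan, and its preliminary observations are sound: for an adverse graph $H$ with $\tau=-\lambda_{\min}(H)$, extending an optimal solution of $(P(H-N_H(v)))$ by zeros does give an optimal solution $x^{(v)}$ of $(P(H))$ with $x^{(v)}_v=1$ (since $v$ is isolated in $H-N_H(v)$ and $\lambda_{\min}$ is preserved, so the scaling $\tau$ is the same), and a $\{0,1\}$-valued optimum of $(P(H))$ would indeed have stable support $S$ satisfying \eqref{Luz_Condition}, hence $H\in\mathcal{Q}$ by Theorem~\ref{cardoso_cvetkovic_th}. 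This is consistent with what the paper does know: by Theorem~\ref{cardoso_condition}, an adverse graph is a $\mathcal{Q}$-graph if and only if it has a $(0,\tau)$-regular set, equivalently a $0$-$1$ solution of the linear system \eqref{k_t_system}, and the paper's algorithmic machinery (the simplex-like approach on star complements with Gomory cuts) is precisely a search for such a solution, not a proof that one exists.

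The genuine gap is exactly the step you flag yourself: nothing forces an extreme point of the optimal face of $(P(H))$ (an affine slice of the $\lambda_{\min}$-eigenspace intersected with the nonnegative orthant) to be integral, and integrality of $\tau$ and $\upsilon(H)$ is a necessary but far from sufficient condition. Your fallback induction also fails for the reason you state: deleting $N_H(v)$ together with the resulting isolated vertices destroys adversity, because $N_{H'}(w)\subsetneq N_H(w)$ in general, so $\upsilon(H'-N_{H'}(w))=\upsilon(H')$ cannot be inferred, and part~(3) of Theorem~\ref{algorithmic_results} requires a strict inequality $\upsilon(G-\{v\})\neq\upsilon(G-N_G(v))$ that adverse graphs by definition do not provide. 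Since neither the convex-analytic route nor the reduction is carried out, the proposal establishes nothing beyond reformulations already contained in Theorems~\ref{cardoso_cvetkovic_th} and~\ref{cardoso_condition}; the conjecture stands open both in the paper and after your attempt.
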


Notice that all performed computational tests with adverse graphs supported the validity of Conjecture~\ref{conjecture}, however the
conjecture remains open.\\

From the above analysis it follows that the polynomial-time recognition of $\mathcal{Q}$-graphs is equivalent to the existence of a
polynomial-time algorithm for deciding whether an adverse graph is a $\mathcal{Q}$-graph and so far this problem remains open. \\

There are several families of graphs in which we may decide in polinomial-time whether a graph is a $\mathcal{Q}$-graph or not, as it
is the case of bipartite graphs, since these graphs have no adverse subgraphs (when a vertex is deleted the least eigenvalue increases).
Some other families of this type like threshold graphs, $(C_4,P_5)$-free  graphs, $(K_{1,3},P_5)$-free graphs, etc., appear in \cite{CardosoLuz01, Cardoso03}.\\

As previously mentioned, in general, for the recognition of $\mathcal{Q}$-graphs we may apply the Algorithm 1 in \cite{CardosoLuz16} to
an arbitrary graph $G$ and the main obstacle to such recognition is when this algorithm identifies an adverse subgraph $H$. In such a case,
if $H$ is a $\mathcal{Q}$-graph, then $G$ is also $\mathcal{Q}$-graph. The next result deduced in \cite{Cardoso03} relates adverse
$\mathcal{Q}$-graphs with the existence of $(0,\tau)$-regular sets.

\begin{theorem}\cite{Cardoso03}\label{cardoso_condition}
Let $G$ be an adverse graph and $\tau = -\lambda_{min}(G)$. Then $G \in {\cal Q}$ if and only if $\exists S \subset V(G)$
which is $(0,\tau)$-regular.
\end{theorem}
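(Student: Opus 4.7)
The backward direction does not use the adverse hypothesis: if $S$ is $(0,\tau)$-regular with $\tau = -\lambda_{min}(G)$, then $S$ is stable and $|N_G(v) \cap S| = \tau$ for each $v \notin S$, so Theorem~\ref{cardoso_cvetkovic_th} immediately gives $\alpha(G) = \upsilon(G)$, i.e.\ $G \in \mathcal{Q}$. For the forward direction, assume $G$ is adverse and $G \in \mathcal{Q}$. Theorem~\ref{cardoso_cvetkovic_th} produces a maximum stable set $S$ with $|N_G(v) \cap S| \ge \tau$ for every $v \notin S$; the goal is to promote these inequalities to equalities, via two separate ingredients.

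The first ingredient, valid for any $\mathcal{Q}$-graph, is a pairwise exchange identity. Given two maximum stable sets $S_1, S_2$, both $\chi_{S_1}$ and $\chi_{S_2}$ are optimal for the convex program $(P(G))$, and since its objective $2\hat{e}^T x - x^T(H+I)x$ is concave ($H+I$ is positive semidefinite by the choice $\tau = -\lambda_{min}(G)$), the midpoint $\frac{1}{2}(\chi_{S_1}+\chi_{S_2})$ is optimal too. Evaluating the objective at this midpoint and equating the result to $\upsilon(G) = |S_1| = |S_2|$ yields, after a short expansion,
$$
\tau\,|S_1 \setminus S_2| \;=\; e_G(S_1,S_2) \;=\; \sum_{v \in S_1 \setminus S_2}|N_G(v) \cap S_2|,
$$
where $e_G(S_1, S_2)$ denotes the number of edges of $G$ between $S_1$ and $S_2$. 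Since each summand is at least $\tau$ by Theorem~\ref{cardoso_cvetkovic_th}, each must equal $\tau$, giving $|N_G(v) \cap S_2| = \tau$ for every $v \in S_1 \setminus S_2$.

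The second ingredient, where the adverse hypothesis is essential, is the statement that every vertex of $G$ lies in some maximum stable set. For $v \in V(G)$, form $G' = G - N_G(v)$ and use the adverse equalities $\upsilon(G') = \upsilon(G)$ and $\lambda_{min}(G') = \lambda_{min}(G)$. Take an optimal solution $x'$ of $P(G')$: since $v$ is isolated in $G'$, Theorem~\ref{optimality_condition_1} forces $x'_v = 1$. The vector obtained by extending $x'$ by zero on $N_G(v)$ is an optimal solution of $P(G)$ with $v$-coordinate equal to $1$, and an extreme-point analysis of the optimal face of $P(G)$, combined with the integrality of $\upsilon(G) = \alpha(G)$ built into the adverse definition, yields a maximum stable set of $G$ containing $v$. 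Combining the two ingredients: fix any maximum stable set $S$; for each $v \notin S$ the second ingredient provides a maximum stable set $S_v \ni v$, and the first ingredient applied to the pair $(S_v, S)$ gives $|N_G(v) \cap S| = \tau$, so $S$ is $(0,\tau)$-regular. The main obstacle is the second ingredient: translating the spectral and convex-$QP$ equalities in the definition of an adverse graph into the combinatorial statement ``every vertex lies in some maximum stable set'' is the delicate core of the proof and uses both the integrality constraints in the definition of adverse and the structure of the optimal face of $(P(G))$.
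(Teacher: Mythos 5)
Your backward direction is fine, and your first ingredient is correct: the midpoint computation does give $\tau\,|S_1\setminus S_2|=\sum_{v\in S_1\setminus S_2}|N_G(v)\cap S_2|$, and Luz's condition (each summand at least $\tau$, since $G\in\mathcal{Q}$) forces termwise equality. The genuine gap is the last step of your second ingredient. You correctly build an optimal solution $\tilde{x}$ of $(P(G))$ with $\tilde{x}_v=1$ and $\tilde{x}\equiv 0$ on $N_G(v)$ (extend an optimal solution of $P(G-N_G(v))$ by zeros, using both adverse equalities and Theorem~\ref{optimality_condition_1}), but the passage from ``there is an optimal solution of the convex program with $x_v=1$'' to ``there is a maximum stable set containing $v$'' is only asserted. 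The optimal face of a convex quadratic program can have fractional extreme points, and integrality of the optimal value $\upsilon(G)$ gives no control on integrality of optimal solutions, so ``an extreme-point analysis combined with the integrality built into the adverse definition'' is not an argument; as you say yourself, this is the delicate core, and as written it is missing. (For the record, the survey states this theorem with a citation and no proof, so the comparison here is against a correct completion rather than a proof printed in the paper.)

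The good news is that the gap is repairable and your detour through ``every vertex lies in a maximum stable set'' is unnecessary. The same convexity fact that powers your first ingredient does the whole job: for a concave quadratic maximized over a convex set, any two optimal solutions $x_1,x_2$ satisfy $(x_1-x_2)^T(H+I)(x_1-x_2)=0$, hence $(H+I)x_1=(H+I)x_2$ because $H+I\succeq 0$. Apply this to your $\tilde{x}$ and to $\chi_S$, the characteristic vector of an arbitrary maximum stable set $S$ (optimal for $(P(G))$ because $G\in\mathcal{Q}$), and read off coordinate $v$ for any $v\notin S$: on one side $\bigl((H+I)\tilde{x}\bigr)_v=\tilde{x}_v+\tfrac{1}{\tau}\sum_{j\in N_G(v)}\tilde{x}_j=1$ since $\tilde{x}$ vanishes on $N_G(v)$, while on the other side $\bigl((H+I)\chi_S\bigr)_v=\tfrac{1}{\tau}|N_G(v)\cap S|$. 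Hence $|N_G(v)\cap S|=\tau$ for every $v\notin S$, i.e.\ every maximum stable set of an adverse $\mathcal{Q}$-graph is $(0,\tau)$-regular, which is exactly the forward direction. I recommend replacing your second ingredient by this direct comparison.
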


As immediate consequence of Theorem \ref{cardoso_condition}, deciding whether an adverse graph $G$ is a $\mathcal{Q}$-graph
is equivalent to decide if it has a $(0,\tau)$-regular set, with $\tau=-\lambda_{min}(G)$.\\

The next subsections analyse two approaches to the determination of $(0,\tau)$-regular sets in adverse graphs. The first approach
can be applied to the general problem of determining $(\kappa,\tau)$-regular sets by using specialized algorithms to obtain binary 0-1
solutions of linear systems. The second approach is based on the theory of star sets/star complements and applies specifically to
the determination of $(0,\tau)$-regular sets, taking into account that their characteristic vectors are vertices of the polytope
of nonnegative solutions of the linear system \eqref{k_t_system} below.

\subsection{Determination of a $(0,\tau)$-regular set of an adverse graph by solving a linear system}

The following theorem unifies the results obtained in \cite{CardosoRama04, CardosoLozinLuzPacheco16} which can be applied
for the determination of a $(\kappa,\tau)$-regular set, with $\kappa=0$ and $\tau=-\lambda_{min}(G)$.

\begin{theorem}\cite{CardosoRama04, CardosoLozinLuzPacheco16}
Considering a graph $G$ of order $n$, let $\overline{x}$ be a particular solution of the linear system
\begin{equation}
 \begin{array}{rcl}
 \left(A_G - (\kappa-\tau)I_{n}\right)x & = & \tau \hat{e}.\label{k_t_system}
 \end{array}
\end{equation}
Then, the graph $G$ has a $(\kappa,\tau)$-regular set $S \subset V(G)$ if and only if one of the following conditions hold.
\begin{enumerate}
\item The system \eqref{k_t_system} has a $0-1$ solution which is the characteristic vector of $S$.
\item The characteristic vector of $S$, $\mathbf{x}$, is such that
      \begin{equation}
      \mathbf{x}=\overline{x}+\hat{u},
      \end{equation}
      where $\hat{u}=\mathbf{0}$ if $\kappa-\tau$ is not an eigenvalue of $G$ and $\hat{u}$ is an eigenvector associated to the
      eigenvalue $\lambda = \kappa-\tau$, otherwise.
\end{enumerate}
\end{theorem}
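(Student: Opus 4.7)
The plan is to observe that the matrix equation in the theorem is nothing more than a reformulation of the combinatorial definition of a $(\kappa,\tau)$-regular set, and then to describe the solution set of that linear system using elementary linear algebra.

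First I would translate the definition into matrix language. If $\mathbf{x}\in\{0,1\}^n$ is the characteristic vector of $S\subset V(G)$, then for every vertex $v$
$$
(A_G\mathbf{x})_v=\sum_{w\in N_G(v)}\mathbf{x}_w=|N_G(v)\cap S|.
$$
Hence $S$ is $(\kappa,\tau)$-regular if and only if $(A_G\mathbf{x})_v=\kappa$ whenever $\mathbf{x}_v=1$ and $(A_G\mathbf{x})_v=\tau$ whenever $\mathbf{x}_v=0$. These two cases can be compressed into the single identity $(A_G\mathbf{x})_v=\tau+(\kappa-\tau)\mathbf{x}_v$, valid for every $v\in V(G)$, which is exactly
$$
\bigl(A_G-(\kappa-\tau)I_n\bigr)\mathbf{x}=\tau\hat{e}.
$$
This equivalence yields item (1) at once: $G$ admits a $(\kappa,\tau)$-regular set $S$ if and only if \eqref{k_t_system} has a $0$--$1$ solution, and such a solution is precisely the characteristic vector of $S$.

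For item (2), I would invoke the standard description of the affine solution set of a consistent linear system: whenever $\overline{x}$ is any particular solution of \eqref{k_t_system}, every solution has the form $\overline{x}+\hat{u}$ with $\hat{u}\in\ker\bigl(A_G-(\kappa-\tau)I_n\bigr)$. By definition of eigenvalue, this kernel is $\{\mathbf{0}\}$ when $\kappa-\tau$ is not an eigenvalue of $A_G$, and is the eigenspace associated with $\lambda=\kappa-\tau$ otherwise. Combining this with the first part, the characteristic vector $\mathbf{x}$ of a $(\kappa,\tau)$-regular set is exactly a $0$--$1$ vector of the form $\overline{x}+\hat{u}$ with $\hat{u}$ as described, proving the equivalence of (2) with the existence of $S$.

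Since both conditions are seen to be restatements of the matrix equation, the proof reduces to the displayed identity above together with routine linear algebra, and I do not foresee a genuine obstacle. The one subtlety worth making explicit is that the existence of the particular solution $\overline{x}$ is guaranteed by the hypothesis of item (2) (or equivalently by the existence of $S$ in the forward direction), so the affine description $\overline{x}+\ker\bigl(A_G-(\kappa-\tau)I_n\bigr)$ is legitimate; this is where I would be most careful in writing up the details.
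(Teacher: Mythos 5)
Your argument is correct: the displayed identity $(A_G\mathbf{x})_v=\tau+(\kappa-\tau)\mathbf{x}_v$ is exactly the matrix reformulation of the definition of a $(\kappa,\tau)$-regular set, and the affine description of the solution set via the kernel of $A_G-(\kappa-\tau)I_n$ gives item (2); this is essentially the same argument as in the cited sources (the survey itself states the theorem without printing a proof). The only point worth making explicit in a write-up is that $\hat{u}$ may be the zero vector even when $\kappa-\tau$ is an eigenvalue (namely when $\overline{x}$ itself is $0$--$1$), which your phrasing $\hat{u}\in\ker\bigl(A_G-(\kappa-\tau)I_n\bigr)$ already covers.
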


\begin{corollary}\cite{CardosoLozinLuzPacheco16}
If a graph $G$ has a $(\kappa,\tau)$-regular set $S \subseteq V(G)$ and $\overline{x}$  is a particular solution of \eqref{k_t_system}, then
$|S| = \hat{e}^{T}\overline{x}$.
\end{corollary}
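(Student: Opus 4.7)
The plan is to use a classical symmetry trick for linear systems whose coefficient matrix is symmetric. The first observation I would establish is that the characteristic vector $\mathbf{x}$ of the $(\kappa,\tau)$-regular set $S$ is itself a solution of \eqref{k_t_system}. This is a direct check on the definition: for $i \in S$ we have $(A_G \mathbf{x})_i = |N_G(i) \cap S| = \kappa$, hence $((A_G - (\kappa-\tau)I_n)\mathbf{x})_i = \kappa - (\kappa-\tau) = \tau$; for $i \notin S$ we have $(A_G\mathbf{x})_i = \tau$ and $\mathbf{x}_i = 0$, so again the $i$-th entry of the left-hand side equals $\tau$. Thus $(A_G - (\kappa-\tau)I_n)\mathbf{x} = \tau \hat{e}$, so $\mathbf{x}$ is a solution of \eqref{k_t_system} alongside $\overline{x}$.

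Next, I would evaluate the scalar quantity $\mathbf{x}^{T}(A_G - (\kappa-\tau)I_n)\overline{x}$ in two different ways. On the one hand, treating $\overline{x}$ as the solution, this equals $\tau\,\mathbf{x}^{T}\hat{e} = \tau|S|$, since $\mathbf{x}$ is a $0$-$1$ vector supported on $S$. On the other hand, since $A_G - (\kappa-\tau)I_n$ is symmetric, the same expression equals $\left((A_G-(\kappa-\tau)I_n)\mathbf{x}\right)^{T}\overline{x} = (\tau\hat{e})^{T}\overline{x} = \tau\,\hat{e}^{T}\overline{x}$. Equating the two evaluations gives $\tau|S| = \tau\,\hat{e}^{T}\overline{x}$, and dividing by $\tau$ (which is positive under the standing hypothesis $\tau = -\lambda_{\min}(G) > 0$ inherited from the context of $(0,\tau)$-regular sets in adverse graphs) yields $|S| = \hat{e}^{T}\overline{x}$.

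There is essentially no obstacle: the only delicate point is that the argument requires $\tau \neq 0$ in order to cancel the common factor in the last step, but this is harmless in the setting where the corollary is invoked. An equivalent phrasing, which I would mention only if space allows, is that $\mathbf{x} - \overline{x} = \hat{u}$ lies in the kernel of $A_G - (\kappa-\tau)I_n$ and one verifies $\hat{e}^{T}\hat{u} = 0$ by computing $\tau\,\hat{e}^{T}\hat{u} = \hat{u}^{T}(A_G-(\kappa-\tau)I_n)\overline{x} = ((A_G-(\kappa-\tau)I_n)\hat{u})^{T}\overline{x} = 0$; this is the same argument rewritten through the parametrization given in the preceding theorem, and it is what makes the formula $|S| = \hat{e}^{T}\overline{x}$ independent of the choice of particular solution $\overline{x}$.
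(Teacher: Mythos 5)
Your proof is correct and follows essentially the same route as the paper's source \cite{CardosoLozinLuzPacheco16} and the theorem immediately preceding the corollary: the characteristic vector $\mathbf{x}$ of $S$ solves \eqref{k_t_system}, the difference $\mathbf{x}-\overline{x}$ lies in the kernel of the symmetric matrix $A_G-(\kappa-\tau)I_n$, and since $\tau\hat{e}$ lies in its range this kernel element is orthogonal to $\hat{e}$, giving $|S|=\hat{e}^{T}\mathbf{x}=\hat{e}^{T}\overline{x}$. Your caveat that $\tau\neq 0$ is needed is also well placed, as the paper's convention of treating $V(G)$ as a $(p,0)$-regular set of a $p$-regular graph is precisely the degenerate case where the cancellation step would fail.
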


Therefore, considering any particular solution $\overline{x}$ of \eqref{k_t_system}, if $\hat{e}^{T}\overline{x}$ is not integer,
then there is no $(\kappa,\tau)$-regular set in $G$ \cite{CardosoLozinLuzPacheco16}.\\

\subsection{A simplex-like approach to the determination of a $(0,\tau)$-regular set of an adverse graph based on star sets/star complements}\label{simplex_subsection}

A simplex-like algorithm for the recognition of adverse graphs which are $\mathcal{Q}$-graphs was introduced in \cite{CardosoLuz16}.
This algorithm deals with bases defined by star complements \cite{CvetRowSim2010} of a graph $G$ associated to its least eigenvalue
$\lambda_{min}(G)$.\\

Let us assume that a graph $G$ of order $n$ has $q$ distinct eigenvalues $\mu_1, \mu_2, \dots, \mu_q$, where $\mu_1$ has multiplicity
$m_1$, $\mu_2$ multiplicity $m_2$, $\cdots$,  $\mu_q$  multiplicity $m_q$ (and then $m_1+m_2+ \cdots + m_q = n$).  A vertex subset $X$
of $G$ is a star set for some eigenvalue $\mu_i(G)$ with multiplicity $m_i$, if $|X|=m_i$ and $\mu_i(G)$ is not an eigenvalue
of $G-X$. Additionally, if $X$ is a star set of $G$ for the eigenvalue $\mu_i(G)$, then $V(G) \setminus X$ is the co-star
set $\overline{X}$ and the subgraph $G-X = G[\overline{X}]$ is a star complement of $G$ for $\mu_i(G)$. Denoting by $A_X$ the submatrix
of $A_G$ corresponding to the subgraph $G[X]$ and by $C_{\bar{X}}$ the submatrix of $A_G$ corresponding to the subgraph $G[\overline{X}]$,
assuming an adequate permutation of the vertices, throughout the remaining part of the text the adjacency matrix $A_G$ is considered
with the block partition
$$
A_G = \left[\begin{array}[c]{cc}%
                  A_{X} & N^{T}\\
                  N     & C_{\bar{X}}%
             \end{array}\right].
$$
The vertex set $V(G)$ admits a star partition into the star sets $X_1, X_2, \dots, X_q$, where $X_1$ is a star set for $\mu_1$ (and then $|X_1|=m_1$),
$X_2$ is a star set for $\mu_2$ (and then $|X_2|=m_2$), $\cdots$, $X_q$ is a star set for $\mu_q$ (and then $|X_q|=m_q$). In Figure~\ref{star_partition}, the labels of the vertices
of the graph depicted on the right are the eigenvalues of $G$ and the vertices with the same label form a star set for the corresponding
eigenvalue. Notice that the graph $G$ has the eigenvalue $-2$ with multiplicity $2$, the eigenvalue $-1$ with multiplicity $1$, the eigenvalue
$0$ with multiplicity $1$, the eigenvalue $1$ with multiplicity $2$ and the eigenvalue $3$ with  multiplicity $1$. The star partition
depicted in Figure~\ref{star_partition} is $V(G)=\{a, d\} \cup \{b, c\} \cup \{e\} \cup \{f\} \cup \{g\}$.


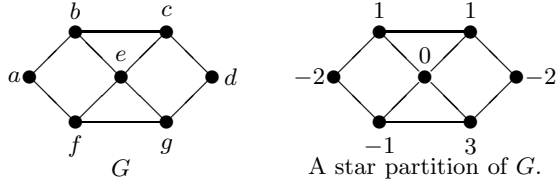
\begin{figure}[ht]
          \begin{center}
          \unitlength=0.4 mm
          \begin{picture}(205,60)(-50,100)
          %
          \put(-30,135){\makebox(0,0){\footnotesize $a$}}
          \put(-10,157){\makebox(0,0){\footnotesize $b$}}
          \put(20,157){\makebox(0,0){\footnotesize $c$}}
          \put(41,135){\makebox(0,0){\footnotesize $d$}}
          \put(20,112){\makebox(0,0){\footnotesize $g$}}
          \put(-10,112){\makebox(0,0){\footnotesize $f$}}
          \put(5,142){\makebox(0,0){\footnotesize $e$}}
          \put(-25,135){\circle*{4}} 
          \put(-10,150){\circle*{4}} 
          \put(20,150){\circle*{4}} 
          \put(35,135){\circle*{4}} 
          \put(20,120){\circle*{4}} 
          \put(-10,120){\circle*{4}} 
          \put(5,135){\circle*{4}} 
          \put(-10,120){\line(1,0){30}} 
          \put(-10,150){\line(1,0){30}} 
          \put(-10,120){\line(-1,1){15}} 
          \put(-10,150){\line(-1,-1){15}} 
          \put(20,120){\line(1,1){15}} 
          \put(20,150){\line(1,-1){15}} 
          \put(-10,120){\line(1,1){30}} 
          \put(20,120){\line(-1,1){30}} 
          \put(5,105){\footnotesize \makebox(0,0){$G$}}
          %
          %
          %
          \put(67,135){\makebox(0,0){\footnotesize $-2$}}
          \put(90,157){\makebox(0,0){\footnotesize $1$}}
          \put(120,157){\makebox(0,0){\footnotesize $1$}}
          \put(143,135){\makebox(0,0){\footnotesize $-2$}}
          \put(120,112){\makebox(0,0){\footnotesize $3$}}
          \put(90,112){\makebox(0,0){\footnotesize $-1$}}
          \put(105,142){\makebox(0,0){\footnotesize $0$}}
          \put(75,135){\circle*{4}} 
          \put(90,150){\circle*{4}} 
          \put(120,150){\circle*{4}} 
          \put(135,135){\circle*{4}} 
          \put(120,120){\circle*{4}} 
          \put(90,120){\circle*{4}} 
          \put(105,135){\circle*{4}} 
          \put(90,120){\line(1,0){30}} 
          \put(90,150){\line(1,0){30}} 
          \put(90,120){\line(-1,1){15}} 
          \put(90,150){\line(-1,-1){15}} 
          \put(120,120){\line(1,1){15}} 
          \put(120,150){\line(1,-1){15}} 
          \put(90,120){\line(1,1){13}} %
          \put(120,150){\line(-1,-1){13}} %
          \put(120,120){\line(-1,1){13}} %
          \put(90,150){\line(1,-1){13}} %
          \put(105,105){\makebox(0,0){\footnotesize A star partition of $G$.}}
          \end{picture}
          \caption{A graph $G$ and a star partition of $G$.} \label{star_partition}
          \end{center}
          \end{figure}

\begin{theorem}\cite{CvetkovicRowlinson04}
Every graph has a star partition.
\end{theorem}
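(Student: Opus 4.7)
The plan is to realize a star partition as a nonvanishing term in a generalized Laplace expansion of an orthogonal eigenvector matrix. Fix an orthonormal basis of $\mathbb{R}^n$ consisting of eigenvectors of $A_G$ grouped by eigenvalue, and for each $i$ let $U_i$ be the $m_i \times n$ matrix whose rows form an orthonormal basis of the eigenspace $\mathcal{E}_i$ of $\mu_i$; stacking the $U_i$ vertically yields an orthogonal matrix $U \in \mathbb{R}^{n \times n}$ with $\det(U) = \pm 1$.

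The main step is the spectral characterization of star sets: for $X \subseteq V(G)$ with $|X| = m_i$, the set $X$ is a star set for $\mu_i$ if and only if the $m_i \times m_i$ submatrix $U_i[X]$ consisting of the columns of $U_i$ indexed by $X$ is nonsingular. I would establish this via the chain
\[
\mu_i \notin \mathrm{spec}(G-X) \;\Longleftrightarrow\; \mathcal{E}_i \cap \mathbb{R}^{V\setminus X} = \{0\} \;\Longleftrightarrow\; U_i[X] \text{ nonsingular}.
\]
For the first equivalence, a nonzero $w \in \mathcal{E}_i$ vanishing on $X$ restricts to a $\mu_i$-eigenvector of $G-X$, giving one direction; for the converse, given a $G-X$ eigenvector $v$ with zero extension $\tilde v$, the residue $b := (A - \mu_i I)\tilde v$ is supported on $X$ and lies in $\mathrm{Image}(A - \mu_i I) = \mathcal{E}_i^\perp$, and the dimension identity $\dim \mathcal{E}_i + \dim \mathbb{R}^{V\setminus X} = n$ together with the hypothesis forces $\mathcal{E}_i^\perp \cap \mathbb{R}^X = \{0\}$, hence $b = 0$ and then $\tilde v \in \mathcal{E}_i \cap \mathbb{R}^{V\setminus X} = \{0\}$. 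The second equivalence holds because $\mathcal{E}_i \cap \mathbb{R}^{V\setminus X} = \{0\}$ is the injectivity of the restriction map $\mathcal{E}_i \to \mathbb{R}^X$, $w \mapsto (w_v)_{v \in X}$, which in the orthonormal basis of $\mathcal{E}_i$ is represented by $U_i[X]^T$.

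With the characterization in place, expand $\det(U)$ by the generalized Laplace rule along the row-blocks $U_1, \ldots, U_q$:
\[
\det(U) = \sum_{(X_1,\ldots,X_q)} \epsilon(X_1,\ldots,X_q) \prod_{i=1}^q \det\bigl(U_i[X_i]\bigr),
\]
where the sum runs over ordered partitions of $V(G)$ with $|X_i| = m_i$ and $\epsilon \in \{\pm 1\}$ is the sign of the associated permutation. Since $\det(U) = \pm 1 \neq 0$, at least one summand is nonzero; the corresponding partition $(X_1, \ldots, X_q)$ satisfies $\det(U_i[X_i]) \neq 0$ for every $i$, so each $X_i$ is a star set for $\mu_i$, yielding the desired star partition. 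The main obstacle is the spectral characterization in the second paragraph; once that is secured, the Laplace expansion delivers the existence essentially for free.
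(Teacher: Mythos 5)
The paper states this theorem only as a citation to \cite{CvetkovicRowlinson04} and gives no proof of its own, so there is no internal argument to compare against; your proposal therefore has to stand on its own, and it does. The spectral characterization is sound: the dimension count $\dim\mathcal{E}_i+\dim\mathbb{R}^{V\setminus X}=n$ together with $\mathcal{E}_i\cap\mathbb{R}^{V\setminus X}=\{0\}$ gives $\mathbb{R}^n=\mathcal{E}_i\oplus\mathbb{R}^{V\setminus X}$, and taking orthogonal complements indeed yields $\mathcal{E}_i^{\perp}\cap\mathbb{R}^{X}=\{0\}$, which kills the residue $b$ and forces the zero extension $\tilde v$ into $\mathcal{E}_i\cap\mathbb{R}^{V\setminus X}=\{0\}$; the identification of the restriction map $\mathcal{E}_i\to\mathbb{R}^X$ with $U_i[X]^{T}$ in the orthonormal basis is also correct, so nonsingularity of $U_i[X]$ is exactly the star-set condition. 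The generalized Laplace expansion of $\det(U)$ over ordered partitions of the columns into blocks of sizes $m_1,\dots,m_q$ is valid (it follows from the Leibniz formula by grouping permutations according to the image of each row block), and since $\det(U)=\pm 1$ some term is nonzero, producing the star partition. This is essentially the classical existence argument of Cvetkovi\'c, Rowlinson and Simi\'c: there one characterizes star sets by $\mathbb{R}^n=\mathcal{E}(\mu)\oplus\mathbb{R}^{V\setminus X}$ and then extracts a partition of the canonical basis compatible with the decomposition $\mathbb{R}^n=\mathcal{E}(\mu_1)\oplus\cdots\oplus\mathcal{E}(\mu_q)$ via a determinant (or matroid/transversal) lemma; your version simply packages that lemma as the block Laplace expansion of the orthogonal eigenvector matrix, which is a clean and complete way to do it. The only detail to polish in a full write-up is the sign bookkeeping $\epsilon(X_1,\dots,X_q)$, which is immaterial since only nonvanishing is needed.
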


In general, each graph has several star partitions. For instance, the Petersen graph (depicted in Figure~\ref{petersengraph}) has $750$ star partitions.
Furthermore, if $X$ is a star set for the eigenvalue $\mu_i(G)$, then there exists a star partition $V(G)=X_1 \cup \cdots \cup X_i \cup \cdots \cup X_q$ such that $X=X_i$.
Not every vertex of a graph $G$ belongs to some star set of an eigenvalue $\mu_j(G)$. However, every vertex belongs to some co-star set of $\mu_j(G)$.

\begin{lemma}\cite{CardosoLozinLuzPacheco16}\label{subsystem_lemma}
Let $G$ be a graph of order $n$, $\lambda$ an eigenvalue of $G$ and $X \subset V(G)$ a star set of $\lambda$. Then the rows of the submatrix
\begin{equation}\label{star_complement_submatrix}
  \left[\begin{array}[c]{cc}%
   N \; & \; C_{\bar{X}}-\lambda I_{\bar{X}}%
  \end{array}\right]
\end{equation}
spans the row space of $A_G - \lambda I_n = \left[\begin{array}[c]{cc}%
                                          A_{X}-\lambda I_{X} & N^{T}\\
                                          N                   & C_{\bar{X}}-\lambda I_{\bar{X}}%
                                         \end{array}\right].$
Moreover, $X' \subset V(G)$ is another star set of $\lambda$ if and only if the submatrix of \eqref{star_complement_submatrix} defined by the
columns indexed by the vertices in the co-star set $\overline{X}'$ is basic, that is, nonsingular.
\end{lemma}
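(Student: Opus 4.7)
The plan is to leverage two ingredients: the rank identity $\mathrm{rank}(A_G - \lambda I_n) = n - m$, where $m$ denotes the multiplicity of $\lambda$, together with the standard characterization that a vertex subset $X'$ of size $m$ is a star set for $\lambda$ if and only if the restriction map from $E_\lambda := \ker(A_G - \lambda I_n)$ to $\mathbb{R}^{X'}$ is an isomorphism (equivalently, injective).

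For the first assertion, I would begin by noting that, since $X$ is a star set, $|X| = m$, so $|\bar{X}| = n - m$, and $\lambda$ is not an eigenvalue of $G - X$; hence the block $C_{\bar{X}} - \lambda I_{\bar{X}}$, which is precisely $A_{G-X} - \lambda I_{\bar{X}}$, is nonsingular. The submatrix \eqref{star_complement_submatrix} has exactly $n - m$ rows, and these are linearly independent: any nontrivial linear combination of them, restricted to the $\bar{X}$-columns, is a nontrivial linear combination of the rows of the nonsingular block $C_{\bar{X}} - \lambda I_{\bar{X}}$, hence nonzero. Since the row space of $A_G - \lambda I_n$ has dimension $n - m$, these $n - m$ linearly independent rows form a basis for it.

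For the second assertion, observe first that demanding a square submatrix indexed by columns in $\bar{X}'$ forces $|\bar{X}'| = n - m$, i.e.\ $|X'| = m$. Denote the resulting $(n-m) \times (n-m)$ block by $B$. Since by the first part \eqref{star_complement_submatrix} has the same row space as $A_G - \lambda I_n$, the two matrices share the same kernel, namely $E_\lambda$. Consequently $B$ is singular iff there exists a nonzero $y \in \mathbb{R}^{\bar{X}'}$ with $By = 0$, which, upon extending $y$ by zeros on $X'$, is equivalent to the existence of a nonzero $\lambda$-eigenvector of $G$ with support contained in $\bar{X}'$, i.e.\ vanishing on $X'$.

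By the characterization invoked at the outset, $X'$ (with $|X'| = m$) is a star set for $\lambda$ iff no nonzero $\lambda$-eigenvector vanishes on $X'$, which is precisely the condition just derived for $B$ to be nonsingular. The main delicate point I foresee is the justification of the star-set-via-projection characterization: a self-contained treatment would first note that when $|X'| = m = \dim E_\lambda$ injectivity and bijectivity of the projection $E_\lambda \to \mathbb{R}^{X'}$ coincide, and then verify via a block decomposition of $A_G - \lambda I_n$ with respect to $(X', \bar{X}')$ that bijectivity of this projection is equivalent to $\lambda \notin \mathrm{spec}(G - X')$.
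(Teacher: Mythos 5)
The survey states this lemma without proof (it is simply quoted from the cited reference), so there is no in-paper argument to compare against; I can only assess your proposal on its own terms. Most of it is sound. The first assertion is proved completely: $\operatorname{rank}(A_G-\lambda I_n)=n-m$ by symmetry of $A_G$, the $n-m$ rows of \eqref{star_complement_submatrix} are rows of $A_G-\lambda I_n$ and are independent because $C_{\bar{X}}-\lambda I_{\bar{X}}$ is nonsingular, hence they span the row space. Your reduction of the second assertion is also correct: equal row spaces give equal kernels, so the kernel of \eqref{star_complement_submatrix} is $E_\lambda$, and therefore the square submatrix $B$ on the columns $\overline{X}'$ is singular exactly when some nonzero vector of $E_\lambda$ vanishes on $X'$ (and the size constraint forces $|X'|=m$).

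The gap sits exactly at the point you flag, and your proposed repair is too optimistic. You still need: for $|X'|=m$, ``no nonzero vector of $E_\lambda$ vanishes on $X'$'' is equivalent to ``$\lambda\notin\operatorname{spec}(G-X')$''. A block decomposition with respect to $(X',\overline{X}')$ gives only the easy direction: if $C'-\lambda I:=A_{G-X'}-\lambda I_{\overline{X}'}$ is nonsingular and $u\in E_\lambda$ vanishes on $X'$, the rows indexed by $\overline{X}'$ of $(A_G-\lambda I_n)u=0$ yield $(C'-\lambda I)u_{\overline{X}'}=0$, so $u=0$. The converse is \emph{not} a formal consequence of the block structure; it is false for non-symmetric matrices (for $M=\left[\begin{smallmatrix}0&1\\0&0\end{smallmatrix}\right]$ with $\lambda=0$ and $X'=\{1\}$ the kernel is spanned by $e_1$, so the restriction to $X'$ is bijective, yet the principal submatrix on $\overline{X}'$ is singular), so the symmetry of $A_G$ must enter explicitly. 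A short fix: assume the restriction $E_\lambda\to\mathbb{R}^{X'}$ is bijective and suppose $(C'-\lambda I)w=0$ with $w\neq 0$; put $\tilde w=(0,w)$ in the $(X',\overline{X}')$ partition. Then $v:=(A_G-\lambda I_n)\tilde w$ is supported on $X'$ and lies in $\operatorname{im}(A_G-\lambda I_n)=E_\lambda^{\perp}$ (this is where symmetry is used). Choosing $u\in E_\lambda$ with $u|_{X'}=v|_{X'}$ gives $0=u^{T}v=\|v|_{X'}\|^{2}$, hence $v=0$ and $\tilde w$ is a nonzero vector of $E_\lambda$ vanishing on $X'$, a contradiction. (Alternatively, note that your condition says the columns of $A_G-\lambda I_n$ indexed by $\overline{X}'$ are independent, use symmetry to get the same for the rows, and apply the Frobenius rank inequality to conclude $C'-\lambda I$ is nonsingular.) With this step inserted, your proof is complete.
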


As a consequence of Lemma~\ref{subsystem_lemma}, the linear system \eqref{k_t_system} is equivalent to the linear subsystem with just the equations
corresponding to the rows of the matrix \eqref{star_complement_submatrix}, that is, is equivalent to the linear subsystem
\begin{eqnarray}
\left[\begin{array}[c]{cc}%
   N \; & \; C_{\bar{X}}-\lambda I_{\bar{X}}%
\end{array}\right] x & = & \tau \hat{e}{\bar{X}}, \label{sisreduzido}
\end{eqnarray}
where the all one vector $\hat{e}_{\bar{X}}$ has $|\bar{X}|$ components.\\

The next result allows the search for characteristic vectors of $(0,\tau)$-regular sets, that is, $0-1$ solutions of the linear system \eqref{k_t_system},
by application of the simplex method to the subsystem \eqref{sisreduzido}, where $\lambda=-\tau$.

\begin{theorem}\cite{CardosoLozinLuzPacheco16}
Every $0-1$ solution of the linear system \eqref{k_t_system}, with $\lambda=-\tau$,  is a basic nonnegative solution of the subsystem \eqref{sisreduzido}.
\end{theorem}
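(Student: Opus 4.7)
The plan is the following. Let $x \in \{0,1\}^n$ satisfy \eqref{k_t_system} with $\kappa = 0$ and $\lambda = -\tau$, and set $S := \mathrm{supp}(x)$; by the theorem preceding it, $x$ is the characteristic vector of a $(0,\tau)$-regular set $S$, which is in particular stable. Nonnegativity of $x$ is immediate. To show that $x$ is a basic nonnegative solution of \eqref{sisreduzido}, it suffices to prove that the columns of the coefficient matrix $M := \left[\,N \;\; C_{\bar{X}} + \tau I_{\bar{X}}\,\right]$ indexed by $S$ are linearly independent: then $|S| \le |\bar{X}|$ and the column set indexed by $S$ can be extended to a basis, the components of $x$ outside $S$ being already zero.

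The first step is to identify $\ker(M)$, viewed as an operator $\mathbb{R}^n \to \mathbb{R}^{|\bar{X}|}$. Let $m$ be the multiplicity of $-\tau$. By Lemma~\ref{subsystem_lemma}, the rows of $M$ span the row space of $A_G + \tau I_n$, so $\mathrm{rank}(M) = \mathrm{rank}(A_G + \tau I_n) = n - m$. Moreover $\ker(A_G + \tau I_n) \subseteq \ker(M)$ (restriction to the rows indexed by $\bar{X}$), and both kernels have dimension $m$, hence they coincide. Therefore $\ker(M)$ is precisely the eigenspace of $A_G$ associated with the eigenvalue $-\tau$.

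The central step is then to prove that no nonzero eigenvector $y$ of $A_G$ with eigenvalue $-\tau$ can have support inside $S$. Suppose $A_G y = -\tau y$ and $\mathrm{supp}(y) \subseteq S$. For every $i \in S$,
$$
-\tau\, y_i \;=\; (A_G y)_i \;=\; \sum_{j \in N_G(i)} y_j \;=\; \sum_{j \in N_G(i) \cap S} y_j \;=\; 0,
$$
using $\mathrm{supp}(y) \subseteq S$ and the stability of $S$ (so $N_G(i) \cap S = \emptyset$). Since $\tau > 0$, $y_i = 0$ for every $i \in S$; combined with $y \equiv 0$ on $V(G) \setminus S$ this forces $y = 0$.

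Putting the two steps together, any relation $\sum_{i \in S} c_i M_i = 0$ yields the vector $y := \sum_{i \in S} c_i e_i \in \ker(M)$ with support contained in $S$, whence $y = 0$ and all $c_i$ vanish. This establishes the required linear independence and concludes that $x$ is a basic nonnegative solution of \eqref{sisreduzido}. The only delicate ingredient is the kernel-coincidence step, which is a direct consequence of Lemma~\ref{subsystem_lemma}; the combinatorial heart of the argument — the short computation exploiting the stability of $S$ — is then essentially automatic.
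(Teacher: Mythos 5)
Your proof is correct. Note that the survey itself states this theorem without reproducing a proof (it is cited from the reference on efficient domination through eigenvalues), so there is no in-paper argument to compare against; your route is the natural one and fits the star set/star complement machinery the paper relies on. The two key steps both check out: Lemma~\ref{subsystem_lemma} gives $\operatorname{rank}\bigl[\,N \;\; C_{\bar{X}}+\tau I_{\bar{X}}\,\bigr]=\operatorname{rank}(A_G+\tau I_n)=n-m$, and since the rows of the reduced matrix are rows of $A_G+\tau I_n$, a dimension count identifies its kernel with the $(-\tau)$-eigenspace; then the observation that an eigenvector supported on the stable set $S$ must vanish (because $-\tau y_i=\sum_{j\in N_G(i)\cap S}y_j=0$ and $\tau>0$) gives linear independence of the columns indexed by $S$, and extending those columns to a basis exhibits the $0$-$1$ solution as the (unique) basic nonnegative solution for that basis.
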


Notice that when $\lambda = -\tau$ is not an eigenvalue of $G$, the linear system \eqref{k_t_system} has an unique solution and then $G$ has a $(0, \tau)$-regular
set if and only if this unique solution is $0-1$. When  $\lambda=-\tau$ is an eigenvalue of $G$, since the system \eqref{k_t_system} has the same set of solutions
as the subsystem \eqref{sisreduzido}, $G$ has a $(0, \tau)$-regular set if and only if there is a $0-1$ basic solution among the basic solutions of \eqref{sisreduzido}.\\

If $\overline{X}$ is some co-star set of $-\tau = \lambda_{min}(G)$ and $x=\left[\begin{array}{c}
                                                                                        x_N\\
                                                                                        x_B
                                                                                  \end{array}\right],$ with $x_N=0$, is a basic solution of \eqref{sisreduzido}, where
$\lambda=-\tau$, then multiplying both sides of \eqref{sisreduzido} by $\left(C_{\bar{X}}+\tau I_{\bar{X}}\right)^{-1}$ (the inverse of the basic submatrix) we obtain
\begin{eqnarray*}
\left[\begin{array}{cc}
       \left(C_{\bar{X}}+\tau I_{\bar{X}}\right)^{-1}N \; & \; I_{\bar{X}}\\
      \end{array}\right] \left[\begin{array}{c}
                                      x_N\\
                                      x_B
                               \end{array}\right] &=&
                               \tau \left(C_{\bar{X}}+\tau I_{\bar{X}}\right)^{-1}e_{\bar{X}}.
\end{eqnarray*}
Therefore, the reduced simplex tableau for this basic solution is the tableau

\begin{center}
\begin{equation}\label{simplex_tableau}
\begin{tabular}{c|c|c}
                 & $x_{N}$                                    &\\ \hline
         $x_{B}$ & $\left(C_{\bar{X}}+\tau I_m\right)^{-1}N$  & $\tau \left(C_{\bar{X}}+\tau I_m\right)^{-1}e_{\bar{X}}$\\ \hline
                 &                                            &
\end{tabular}
\end{equation}
\end{center}

We may apply the fractional dual algorithm for Integer Linear Programming (ILP) with Gomory cuts (see for instance \cite{PapadimitriouSteiglitz98}) for deciding
if there exists (or not) a $0-1$ solution for the system \eqref{sisreduzido}.

\begin{theorem}\cite{CardosoLuz16}
If $G$ is an adverse $\mathcal{Q}$-graph, then the fractional dual algorithm for ILP with Gomory cuts, applied to the system \eqref{sisreduzido}, yields a $0-1$
solution in a finite number of iterations.
\end{theorem}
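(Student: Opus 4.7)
The plan is to reduce the claim to a standard finiteness result for Gomory's fractional dual cutting-plane algorithm, after verifying that the data associated to an adverse $\mathcal{Q}$-graph satisfies the hypotheses under which this algorithm is known to terminate. First, since $G$ is an adverse $\mathcal{Q}$-graph, Theorem~\ref{cardoso_condition} guarantees that $G$ possesses a $(0,\tau)$-regular set $S$ with $\tau = -\lambda_{min}(G)$, and by the definition of adverse graph the number $\tau$ is a positive integer. The characteristic vector $\mathbf{x}_S$ of $S$ is a $0\text{-}1$ solution of the linear system \eqref{k_t_system} (with $\kappa=0$), and by the preceding theorem it is in particular a basic nonnegative solution of the reduced subsystem \eqref{sisreduzido}, corresponding to some co-star set $\overline{X}$ of $\lambda_{min}(G)$.

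Next, I would cast the search for such a solution as a bounded ILP in standard form. Concretely, add the box constraints $0 \le x \le \hat{e}$ (with slack variables $s = \hat{e}-x$), so that the feasible polyhedron is a subset of the unit hypercube and hence bounded; augment the system with an arbitrary linear objective that is bounded over this polyhedron (for instance $\max \hat{e}^T x$, whose optimal value equals $|S|$ by the corollary in Section~4.1). All coefficients of the constraint matrix and of the right-hand side $\tau \hat{e}_{\bar{X}}$ lie in $\mathbb{Z}$ because $A_G$ is a $0\text{-}1$ matrix and $\tau$ is an integer; therefore after applying the change of basis to the tableau \eqref{simplex_tableau} one obtains a rational tableau, to which Gomory cuts can be generated.

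Then I would invoke the classical termination theorem for Gomory's fractional dual cutting-plane algorithm: for an ILP with rational data, bounded feasible region, and a feasible integer point, the fractional dual method with Gomory cuts (applied in the standard lexicographic refinement that breaks ties) terminates in finitely many iterations at an optimal integer solution. See, e.g., \cite{PapadimitriouSteiglitz98}. The hypotheses are met by construction, and since a $0\text{-}1$ feasible solution exists (namely $\mathbf{x}_S$), the algorithm will halt with a $0\text{-}1$ basic solution of \eqref{sisreduzido}, which is the characteristic vector of a $(0,\tau)$-regular set of $G$.

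The main obstacle I expect is not the core convergence argument, which is classical, but rather the careful book-keeping needed to ensure that the algorithm applied to the \emph{reduced} system \eqref{sisreduzido} (rather than the full square system \eqref{k_t_system}) still enjoys finite termination; this requires checking that every basic feasible solution of \eqref{sisreduzido} remains a solution of \eqref{k_t_system} (which follows from Lemma~\ref{subsystem_lemma}), and that the cut-generation procedure preserves this property. A secondary point to verify is boundedness of the polyhedron in the reduced variables, which is ensured by the box constraints added above together with the fact that any $(0,\tau)$-regular set has cardinality $\hat{e}^T \overline{x}$, a fixed integer independent of the chosen basis.
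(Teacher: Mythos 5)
The survey quotes this theorem from \cite{CardosoLuz16} without reproducing its proof, but your argument is correct and follows the route the cited source takes: Theorem~\ref{cardoso_condition} supplies a $0$-$1$ (hence integer) feasible point of \eqref{sisreduzido}, the data are integral, the feasible region is bounded, and the classical finiteness theorem for the lexicographically refined Gomory fractional dual algorithm then terminates in finitely many iterations at an integer, hence $0$-$1$, solution. Note only that the box constraints you add are redundant---so the algorithm can be run on \eqref{sisreduzido} exactly as the statement requires---because \eqref{sisreduzido} is equivalent to \eqref{k_t_system}, whose $i$-th row reads $\tau x_i + \sum_{j \in N_G(i)} x_j = \tau$ and therefore already forces $0 \le x \le \hat{e}$ for every nonnegative solution.
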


It follows an example where a $0-1$ solution is obtained after just one simplex iteration.

\begin{example}
Let us ilustraste the application of the simplex method to the determination of a $(0,2)$-regular set in the graph $G$
depicted in Figure~\ref{star_partition}.\\
Considering as starting star set  for the eigenvalue $-2$, $X=\{a,d\}$ (see Figure~\ref{star_partition}). Then $\overline{X} = \{b, c, e, f, g\}$
and we obtain the sequence o reduced tableaux \eqref{simplex_tableau}:
\begin{center}
\begin{tabular}{r|rr|r}
              & $x_a$ &     $x_d$   &   \\ \hline
      $x_b$   &   1   &      0      & 1 \\
      $x_c$   &   0   &      1      & 1 \\
      $x_e$   &   0   &      1      & 1 \\
      $x_f$   &   1   &      0      & 1 \\
      $x_g$   &  -1   &\framebox{-1}&-1 \\ \hline
              &       &             &
\end{tabular}
\; \text{ and } \; \begin{tabular}{r|rr|r}
                     (1)    & $x_a$ & $x_g$ &   \\ \hline
                    $x_b$   &   1   & 0   & 1 \\
                    $x_c$   &  -1   &-1   & 0 \\
                    $x_e$   &  -1   &-1   & 0 \\
                    $x_f$   &   1   & 0   & 1 \\
                    $x_d$   &   1   &-1   & 1 \\ \hline
                            &       &     &
                                 \end{tabular}
\end{center}
or, alternatively,
\begin{center}
\begin{tabular}{r|rr|r}
              &   $x_a$     & $x_d$  &   \\ \hline
      $x_b$   &      1      &   0    & 1 \\
      $x_c$   &      0      &   1    & 1 \\
      $x_e$   &      0      &   1    & 1 \\
      $x_f$   &      1      &   0    & 1 \\
      $x_g$   &\framebox{-1}&  -1    &-1 \\ \hline
              &     &     &
\end{tabular} \;\text{ and } \; \begin{tabular}{r|rr|r}
                                      (2)    & $x_g$ & $x_d$ &   \\ \hline
                                     $x_b$   &  -1   &  -1   & 0 \\
                                     $x_c$   &   0   &   1   & 1 \\
                                     $x_e$   &   0   &   1   & 1 \\
                                     $x_f$   &  -1   &  -1   & 0 \\
                                     $x_a$   &  -1   &   1   & 1 \\ \hline
                                             &       &       &
                                  \end{tabular}
\end{center}
In the first case the vertex subset $\{b, d, f\}$ is obtained and in the second case we obtain $\{a, c, e\}$.
Both vertex subsets are $(0,2)$-regular.
\end{example}
%

\section{Conclusions}
The concept of convex-$QP$ graph allows us to look at graphs as being partitioned into two subsets. The subset of graphs $G$ for which
the stability number is easily determined by solving the convex quadratic program \eqref{Luz_UpperBound} associated to $G$ and the subset
formed by the remaining graphs. However, in general, it is hard to decide whether the upper bound $\upsilon(G)$ coincides with the stability
number $\alpha(G)$ and throughout more than two decades, this has been the main challenge within this topic.\\
This survey starts in Section~\ref{MS_Extensions} by relating the convex quadratic program \eqref{Luz_UpperBound} with the Motzkin-Straus
quadratic model for the determination of the clique (stability) number of a graph $G$ \cite{MotzkinStraus65} by the introduction of the
parametric quadratic programs \eqref{quadratic_parametric_family} and \eqref{Q_quadratic_family} whose optimal values, $\upsilon_G(\tau)$
and $\nu_G(\tau)$, are the inverse of each other and equal to $\alpha(G)$ in the former case ($1/\alpha(G)$ in the later) when $\tau=1$ (see
\eqref{relations_with_MS}). Notice that \eqref{Luz_UpperBound} is obtained from \eqref{quadratic_parametric_family} by setting $\tau=-\lambda_{min}(G)$.
In the remaining sections, the main published results about the characterization and properties of $\mathcal{Q}$-graphs as well as  their
recognition are presented. The focus of section~\ref{Convex-QP} is the characterization of $\mathcal{Q}$-graphs
and the study of their properties. Section~\ref{Recognition} analyzes two algorithmic strategies for the recognition of $\mathcal{Q}$-graphs.
The main obstacle for deciding wether a graph $G$ is a $\mathcal{Q}$-graph or not is the presence of an adverse subgraph $H$ of $G$. However,
if we are able to conclude that $H$ is a $\mathcal{Q}$-graph, then it follows that $G$ is also a $\mathcal{Q}$-graph. The Conjecture~\ref{conjecture}
which remains open states that every adverse graph is a $\mathcal{Q}$-graph. The recognition of an adverse $\mathcal{Q}$-graph $H$ is equivalent
to the determination of a $(0,\tau)$-regular set, with $\tau=-\lambda_{min}(H)$, which can be done by finding a $0-1$ solution for the linear
system \eqref{k_t_system} or by the application of the star sets/star complements theory using a simplex-like approach. As noted in \cite[Th. 7]{CardosoLuz16},
a graph $G$ with at least one edge is a $\mathcal{Q}$-graph if and only if there is a star set $X \subset V(G)$ associated to $\lambda_{min}(G)$
such that $\upsilon(G-X) = \upsilon(G)$.  Furthermore, there is a maximum stable set $S$ such that $S \subseteq V(G) \setminus X$.

\section*{Acknowledgement}
The author thanks the referees for their careful reading, valuable suggestions and correction of several typos in a previous version of this article.

\subsection{Funding}

This research was partially supported by the Portuguese Foundation for Science and Technology (\textquotedblleft FCT-Funda\c c\~ao para a Ci\^encia e a Tecnologia\textquotedblright),
through the CIDMA - Center for Research and Development in Mathematics and Applications, within project UID/MAT/04106/2013.

\end{document}